\definecolor{shade1}{rgb}{.2, .2, .2}
\definecolor{shade2}{rgb}{.5, .5, .5}
\definecolor{shade3}{rgb}{.8, .8, .8}
\definecolor{shade4}{rgb}{.95, .95, .95}
\tikzstyle arrowstyle=[scale=1]
\tikzstyle directed=[postaction={decorate,decoration={markings,
    mark=at position .65 with {\arrow[arrowstyle]{stealth}}}}]
\tikzstyle reverse directed=[postaction={decorate,decoration={markings,
    mark=at position .65 with {\arrowreversed[arrowstyle]{stealth};}}}]
\newtheorem{theorem}{Theorem}[section]
\newtheorem*{theorem*}{Theorem}
\newtheorem{lemma}[theorem]{Lemma}
\newtheorem{corollary}[theorem]{Corollary}
\theoremstyle{definition}
\newtheorem*{definition}{Definition}\newtheorem*{remark}{Remark}
\newenvironment{proofof}[1]{\noindent {\bf{Proof of #1.}}}{ \hfill\qed\\ }
\newcommand{\Ob}[2]{\Omega^{#1}_{\theta,\overline{#2}}}
\newcommand{\On}[2]{\Omega^{#1}_{\theta,{#2}}}
\newcommand{\tOn}[2]{\tilde \Omega^{#1}_{\theta,{#2}}}
\newcommand{\Rb}[1]{R_{\overline{#1}}}
\newcommand{\Rn}[1]{R_{#1}}
\def\Z{\mathbb{Z}}
\def\R{\mathbb{R}}
\def\A{\mathcal{A}}
\def\G{\mathcal{G}}
\def\az{\A^{\Z^2}}
\def\Tt{T_{\theta}}
\def\S1{\mathbb{S}^1}
\def\i{{i,j}}
\def\Tt{T_{\theta}}
\def\tTt{\tilde T_{\theta}}
\def\A{\mathcal{A}}
\def\bC{{\partial\mathcal C_i}}
\def\hat{\widehat}
\begin{document}
\title[]{Minimality of the Ehrenfest wind-tree model}

\author {Alba M\'alaga Sabogal}
\address{Aix Marseille Universit\'e, CNRS, Centrale Marseille, I2M, UMR
  7373, 13453 Marseille, France}
\curraddr{I2M, CMI, 39 rue Joliot-Curie, F-13453 Marseille Cedex 13\\ France}
 \email{alba.malaga-sabogal@univ-amu.fr}
\email{alba.malaga@polytechnique.edu}

\def\curraddrname{{\itshape Address}}
\author{Serge Troubetzkoy}

  \curraddr{I2M, Luminy\\ Case 907\\ F-13288 Marseille CEDEX 9\\ France}

 \email{serge.troubetzkoy@univ-amu.fr}
 
\begin{abstract}
We consider aperiodic wind-tree models, and show that for a generic (in the sense of Baire) configuration the wind-tree dynamics is minimal in almost all directions,
and has a dense set of periodic points.
 \end{abstract}
 \maketitle
\section{Introduction}
In 1912 Paul et Tatyana Ehrenfest proposed the wind-tree model in order to interpret the ergodic hypothesis of 
Boltzmann \cite{EhEh}.  In the Ehrenfest wind-tree model, a point
particle (the ``wind'') moves freely on the plane and collides with the usual law
of geometric optics with irregularly placed identical square scatterers
(the ``trees'').  Nowadays we would say ``randomly placed'', but the notion of ``randomness'' was not made precise, in fact it would have
been impossible to do so before 
Kolmogorov laid the foundations of probability theory in the 1930s.
The wind-tree model has been intensively studied by physicists, see for example \cite{BiRo}, \cite{DeCoVB}, \cite{Ga}, \cite{HaCo}, \cite{VBHa}, \cite{WoLa} and the references therein.

From the mathematical rigorous point of view,  there have been many recent 
results about the dynamical properties of a periodic version of wind-tree models,
scatterers are identical square obstacles one obstacle centered at each
lattice point. The periodic wind-tree model has been shown to be recurrent  (\cite{HaWe}, \cite{HuLeTr},\cite{AvHu}), to have abnormal diffusion (\cite{DeHuLe},\cite{De}), 
and to have an abscence of egodicity in almost every direction (\cite{FrUl}).
Periodic wind-tree models naturally yield infinite periodic translation surfaces, ergodicity in almost every direction for such surfaces have been obtained only in a few situations
\cite{HoHuWe}, \cite{HuWe}, \cite{RaTr}.

On the other hand for randomly placed obstacles, from the mathematically rigorous point of view, up to know  it has only been shown that if at each point of the lattice $\mathbb{Z}^2$ we either center a square obstacle of fixed
size or omit it in a random way, then
the generic in the sense of Baire wind-tree model is recurrent and has a dense set of periodic points  (\cite{Tr1}).

In this article we continue the study of the Baire generic properties of wind-tree models.  We study a random version of the wind-tree model: the plane is tiled by one by one cells with corners on the lattice  $\Z^2$, in each cell we  place a square tree of a fixed size with  the center
chosen randomly. 
Our main result is that for the generic in the sense of Baire wind-tree model,
for almost all directions the wind-tree model is minimal, in stark contrast to the situation for the periodic wind-tree model which can not have a minimal direction.\footnote{K.~Fr\k{a}czek explained to us
that this follows from arguments close to those in the article \cite{Be}.} This result can be viewed as a topological version of the Ehrenfests question.

The method of proof is by approximation by finite wind-tree models where the dynamics is well understood.
There is a long history of proving results about billiard dynamics by approximation which began with the article of Katok and Zemlyakov \cite{KaZe}.  
This method was used in several of the results on  wind-tree models mentioned above 
\cite{HuLeTr},\cite{AvHu},\cite{Tr1}, see \cite{Tr} for a survey of some other usages in billiards.
The idea of approximating infinite measure systems by compact systems was first studied in \cite{MS}.

{The structure of the article is as follows, in Section \ref{secres} we give formal statements of our results. In Section \ref{secnot} we collect the
notation necessary for our setup.  In Section \ref{secmin}, \ref{secescape} and \ref{secperiod} are devoted to the proof of
different parts of the main theorem.  Our proofs hold in a more general setting than the one described above, for example we can vary the size of the square, or use certain
other polygonal trees.  We discuss such extensions of our result in Section \ref{secgen}. Finally in Appendix \ref{secmindis} we discuss the
relationship between the usual convention on the orbit of singular points for  interval exchange transformations and
for polygonal billiards, these conventions are not the same. Since we are studying minimality in this article a careful
comparison is made, and certain known results are reproved for a class of maps we call eligible. In particular the IETs
arising from billiards with the billiard convention { for orbits arriving at corners of the polygon} are eligible maps.}

\section{Statements of Results}\label{secres}
We consider the plane $\R^2$ tiled by one by one closed square {\em cells} with corners on the lattice  $\Z^2$. Fix $r \in [1/4,1/2)$.
We consider the set of $2r$ by $2r$ squares, with vertical and horizontal sides, centered at $(a,b)$ contained in the unit cell $[0,1]^2$, this set is naturally parametrized by
$$\A := \{t=(a,b): r \le a \le 1-r, \ r \le b \le 1-r\}$$
 with the usual topology inherited from $\R^2$.
Our parameter space is $\az$ with the product topology. It is a Baire space.
Each parameter $g= (a_{\i},b_{\i})_{(\i) \in \Z^2} \in \az$ corresponds to a wind-tree table in the plane in the following manner:
the tree inside the cell corresponding to the lattice point $(\i) \in \Z^2$  is a $2r$ by $2r$ square with center at position $(a_\i,b_\i) + (\i)$. 
The wind-tree table $B^g$ is the plane $\R^2$ with the interiors of the union of these trees removed. Note that trees can intersect only at the boundary of cells.

Fix a direction $\theta \in \S1$. The billiard
flow in the direction $\theta$ is the free motion on the interior of $B^g$ with elastic collision from the boundary of $B^g$ (the boundary of the union of the trees).
The \emph{billiard map}  $\Tt^g$ in the direction $\theta$ on the table is the first return to the boundary. If the flow orbit arrives at a 
corner of the table, the collision is not well defined, and we choose not to define the billiard map, i.e.\ the orbit stops at the last collision with the boundary before reaching the corner;  also backwards orbits starting at a corner of a tree are not defined, but forward orbits starting at a corner are defined. 
Once launched in the direction $\theta$, the billiard direction can only achieve four directions $\{\pm \theta, \pm (\theta - \pi)\}$;
 thus the phase space $\Omega^g_\theta$ of the billiard map $\Tt^g$ is a subset of  the cartesian product of the boundary with  these four  directions. It contains precisely the pairs $(s,\phi)$  such that at $s$ the direction $\phi$ 
 points to the interior of the table, i.e.\ away from the trees.
 The billiard map will be called {\em minimal} if the orbit or every point is dense.

The set of periodic points is called {\em locally dense} if there exists a $G_\delta$-subset of the boundary which is of full measure, such that for every $s$ in this set, there is a dense set of inner-pointing directions $\theta \in \S1$ for which $(s,\theta)$ is periodic. We call a forward (resp.\ backward) $\Tt^g$-orbit a \emph{forward (resp.\ backward) escape orbit} if it visits any compact set only a finite number of times.

\begin{theorem}\label{t1}
There is a dense $G_{\delta}$  set of parameters $\G$ such that for each $g \in \G$:
\begin{enumerate}[i)]
\item for a dense-$G_\delta$ set of full measure of $\theta$  the billiard map $\Tt^g$ is minimal and has forward and backward escape orbits,
\item the map $T^g$ has a dense set of periodic points,
\item if $r$ is rational, then the map $T^g$ has a locally dense set of periodic points,
\item no two trees intersect.
\end{enumerate}
\end{theorem}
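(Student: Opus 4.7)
The strategy is a Baire category argument: I would define $\G$ as a countable intersection of open dense subsets of $\az$, one family for each of the four conclusions. Condition (iv) is the simplest: for each pair of lattice points at distance $1$, the set of $g$ for which the corresponding trees are disjoint is an open condition on four real coordinates, and since $r<1/2$ any touching pair can be separated by an arbitrarily small perturbation. Intersecting over the countably many pairs of adjacent cells then yields a dense $G_\delta$.

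For (i), the minimality statement, I would follow the Katok--Zemlyakov approximation philosophy used previously in \cite{HuLeTr}, \cite{AvHu}, \cite{Tr1}: approximate $g$ by $\Z^2$-periodic configurations $g_N$ obtained by restricting $g$ to an $N\times N$ window and periodizing. On each such periodic table the billiard in a rational direction unfolds to a flow on a compact translation surface, for which a full measure set of directions is minimal. Fix a countable basis $\{U_n\}$ for the phase space; the goal is to show that, off a negligible set of $\theta$,
\[
\mathcal{O}_{n,m} = \bigl\{g : \text{the forward } \Tt^g\text{-orbit of every nonsingular point of } U_n \text{ meets } U_m\bigr\}
\]
is open and dense. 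Openness follows from continuity of finite orbit segments in $g$. Density is established by using the minimality of the periodic model on $g_N$ to produce an orbit from $U_n$ to $U_m$ inside a bounded box, and then perturbing the trees only inside the traversed region to transfer this orbit to $\Tt^g$. Intersecting over $n,m$ gives minimality; the full measure statement on $\theta$ is inherited from the corresponding statement for the rational approximations.

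The escape orbits in (i), as well as (ii) and (iii), are handled by parallel and easier Baire arguments. For escape one exhibits, in a bounded region, a ballistic corridor of tree placements along which some orbit proceeds in direction $\theta$ and exits every compact set; this is a finite, hence open and dense, condition. For dense periodic points one uses that the approximations $g_N$ are rational polygonal billiards and therefore possess dense periodic directions; a short periodic orbit produced in this way survives arbitrary modifications of $g$ outside its trace, making the condition ``$V$ contains a periodic point'' open and dense for every open $V$. When $r$ is rational (iii) follows by applying the same construction uniformly over a countable set of rational directions, yielding the locally dense conclusion.

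The hard part will be the density step in the minimality argument: an orbit of $\Tt^g$ can visit arbitrarily many cells while any admissible perturbation alters only finitely many, so producing a single orbit from $U_n$ to $U_m$ is not enough -- every nonsingular orbit starting in $U_n$ must be controlled. The plan is to exploit uniform minimality (indeed unique ergodicity) of the rational approximations together with continuity of first-entry times, covering $U_n$ by finitely many cylinder sets on which the approximate and true dynamics agree up to the first visit to $U_m$. The delicate bookkeeping of singular versus regular orbits required here is precisely what motivates Appendix \ref{secmindis} on eligible interval exchange transformations, which will supply the form of minimality that transfers between the billiard and IET conventions.
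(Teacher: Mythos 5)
Your overall Baire-category framework matches the paper's, but the approximation scheme you propose for minimality has a genuine gap. You approximate $g$ by $\Z^2$-periodizations $g_N$ and then invoke ``minimality of the periodic model'' to produce an orbit from $U_n$ to $U_m$. The periodic wind-tree table in the plane is \emph{never} minimal in any direction (the paper points this out, citing an argument of Besicovitch/Fr\k{a}czek), and what is minimal for a.e.\ $\theta$ is only the compact quotient surface. Minimality of the quotient tells you the orbit becomes dense modulo $\Z^2$, not that it reaches the specific cell containing $U_m$; controlling which translate of a cell an orbit of the $\Z^2$-cover actually visits is exactly the hard (and generally false) statement. The paper avoids this entirely by approximating with \emph{$N$-ringed} configurations: a closed ring of trees completely covering the boundary of the rhombus $\Rb{N}$, so that the region inside is a genuine compact rational polygonal billiard. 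For such a table, every non-exceptional direction is minimal, and (via Lemma \ref{l4} and Corollary \ref{c7}) every open interval covers the whole compact phase space in \emph{finitely many} iterates $K\le k\le L$ --- a finite condition that persists under small perturbations of the finitely many trees involved. This finite-time covering, applied along a sequence of ringed approximations with radii $N_i\to\infty$, is what replaces your ``orbit from $U_n$ to $U_m$'' step and simultaneously handles your (correctly identified) worry that \emph{all} orbits from $U_n$, not just one, must be controlled. A secondary issue: your sets $\mathcal{O}_{n,m}$ are defined for a fixed $\theta$, so intersecting them gives, for each $\theta$, a residual set of $g$; passing to a single residual set of $g$ good for a dense $G_\delta$ of full measure of $\theta$ requires an interchange of quantifiers that is not automatic. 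The paper does real work here, showing that the covering constants $K_i,L_i$ and the admissible perturbation size can be taken uniform over an open set of directions of controlled measure.

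The escape-orbit argument is also not correct as proposed. ``Some orbit proceeds in direction $\theta$ and exits every compact set'' is not a finite condition: a ballistic corridor certifying escape would have to constrain infinitely many cells, and cylinder sets in the product topology on $\az$ constrain only finitely many coordinates, so the condition you describe is neither open nor verifiable at a finite stage. The paper instead extracts escape orbits from the same nested-ring structure by a compactness argument: for each ring $\Lambda_j$ it builds nested nonempty closed sets of points whose orbits reach $\Lambda_{j+n}$ before returning to $\Lambda_j$, takes the intersection, and then uses the already-established minimality of $\Tt^g$ to rule out the possibility that the limiting orbit is singular. Your treatments of (ii), (iii) and (iv) are essentially in line with the paper (rational/square-tiled ringed tables, persistence of periodic orbits under perturbation, openness of non-intersection), though for (iii) the paper must additionally track a positive proportion of each periodic cylinder surviving the perturbation to get the locally dense ($G_\delta$ of full measure in $s$) conclusion.
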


All the sets mentioned in the theorem depend on the fixed parameter $r$.
From our definition of  minimality we conclude
\begin{corollary}
The backwards orbit of any forward  escape orbit is dense (and vice versa) for each $g \in \G$.
\end{corollary}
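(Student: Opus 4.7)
The plan is to derive this corollary directly from the minimality established in Theorem \ref{t1}(i), reading the phrase ``orbit of every point is dense'' in the paper's definition of minimality as referring to the full bilateral orbit $\{(\Tt^g)^n(s,\phi) : n \in \Z\}$. The one-sided reading would be incompatible with the coexistence of minimality and escape orbits asserted in the theorem, so the bilateral reading is forced. Fix $g \in \G$ and a direction $\theta$ for which $\Tt^g$ is minimal, and let $(s,\phi)$ be a point whose forward orbit $O^+$ is an escape orbit.

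The key step is a short contradiction argument to show that the backward orbit $O^-$ is dense in $\Omega^g_\theta$. First I would assume it is not; then there exists a nonempty open set $V \subset \Omega^g_\theta$ with $V \cap \overline{O^-} = \emptyset$. Since $\partial B^g$ is closed in $\R^2$ and hence locally compact, the phase space $\Omega^g_\theta \subset \partial B^g \times \{\pm\theta,\pm(\theta-\pi)\}$ inherits local compactness, so I can shrink $V$ to a precompact open set whose closure $\overline{V}$ is compact. Minimality tells us that $O^+ \cup O^- \cup \{(s,\phi)\}$ is dense, and since this dense set intersects $V$ only in $O^+$, every point of $V$ must be a limit point of $O^+$. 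In particular $O^+ \cap \overline{V}$ is infinite, contradicting the assumption that $O^+$ is an escape orbit and hence visits $\overline{V}$ only finitely often.

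The ``vice versa'' statement, that the forward orbit of any backward escape orbit is dense, follows by running the identical argument for the inverse map $(\Tt^g)^{-1}$, which simply swaps the roles of $O^+$ and $O^-$ and preserves both minimality and the escape condition.

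The main obstacle I anticipate is not conceptual but rather the bookkeeping of local compactness of $\Omega^g_\theta$ at possible problem points: this is immediate away from corners of the trees, and the hypothesis $r<1/2$ guarantees that trees meet only at isolated corner points of cells, so no pathology arises. A secondary concern is a careful check that the paper's convention about singular orbits (which are not defined at collisions with corners and are interpreted differently for billiards versus IETs as discussed in Appendix~\ref{secmindis}) does not disturb the density argument; since we only use open sets in the phase space, points removed by this convention form a meagre exceptional set and can be safely ignored.
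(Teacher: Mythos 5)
Your argument is correct and is essentially the derivation the paper intends: the corollary is stated as an immediate consequence of the bilateral reading of minimality (the full orbit of every point is dense), and your contradiction via a precompact open set missed by $\overline{O^-}$, forcing infinitely many visits of $O^+$ to a compact set, is the natural way to make that implication precise. The only cosmetic point is that $V$ is contained in $\overline{O^+\cup\{(s,\phi)\}}$ rather than $\overline{O^+}$ alone, but since $V$ is uncountable and the orbit countable this changes nothing.
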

\begin{corollary}
The billiard flow on the wind-tree table is also minimal for each $g \in \G$.
\end{corollary}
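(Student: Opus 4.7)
The plan is to view the billiard flow as a suspension of $\Tt^g$ with roof given by the inter-collision time, and to deduce flow minimality from the minimality of $\Tt^g$ given by Theorem~\ref{t1}(i). Fix $g\in\G$ and a direction $\theta$ in the minimal dense $G_\delta$, a reference state $(s_0,\theta_0)\in\Omega^g_\theta$ whose $\Tt^g$-orbit is dense in $\Omega^g_\theta$, and a target $(p,\phi)$ in the flow phase space together with a small neighborhood $U$ of it; I aim to produce a time $t$ with $\Phi_t(s_0,\theta_0)\in U$. Note that $\tan\theta$ is necessarily irrational, since rational-slope directions cannot be minimal for the map, so $\tan\phi$ is irrational for each of the four admissible flow directions $\phi\in\{\pm\theta,\pm(\theta-\pi)\}$.

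The core step is to run the flow backward from $(p,\phi)$ until it first meets some tree at a boundary point $s^*$. This yields an outgoing boundary state $(s^*,\phi)\in\Omega^g_\theta$ and a transit time $\tau>0$ with $\Phi_\tau(s^*,\phi)=(p,\phi)$. By minimality of $\Tt^g$ there is an iterate $(s_n,\phi_n)=(\Tt^g)^n(s_0,\theta_0)$ arbitrarily close to $(s^*,\phi)$, and because the open segment joining $s^*$ to $p$ lies in the interior of $B^g$ and is therefore bounded away from all trees, continuity of the flow yields $\Phi_\tau(s_n,\phi_n)\in U$ whenever $(s_n,\phi_n)$ is close enough. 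Combining the two assertions produces the required $t$.

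The main obstacle is that the backward orbit from $(p,\phi)$ itself need not meet any tree — Theorem~\ref{t1}(i) explicitly produces escape orbits — so the state $(s^*,\phi)$ above may fail to exist. I would handle this by a preliminary perturbation inside $U$. The standing hypothesis $r\ge 1/4$ forces every lattice cell to contain a tree of side at least $1/2$, so the orthogonal projection of the union of trees onto any line perpendicular to $\phi$ meets each unit interval in a set of length at least $1/2$. Consequently the set of $p'\in U$ whose backward ray in direction $-\phi$ meets some tree is dense in $U$, and replacing $(p,\phi)$ by such a $(p',\phi)$ (with a smaller neighborhood around $p'$) reduces the general case to the previous one. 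A final bookkeeping step picks $(s_n,\phi_n)$ outside the countable family of trajectories that graze a tree corner between times $0$ and $\tau$; this exceptional set is meager in $\Omega^g_\theta$, so the approximation by minimality is unaffected.
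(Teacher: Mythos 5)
Your suspension-flow architecture is the natural one (the paper states this corollary with no proof at all, so the whole burden falls on your argument), and the core approximation step --- approximating the outgoing boundary state $(s^*,\phi)$ by iterates of a dense $\Tt^g$-orbit and pushing forward by the flow for the fixed transit time $\tau$ --- is sound. But there is a genuine gap at the other end of the argument: you only prove that the flow orbit of a point of $\Omega^g_\theta$ is dense, whereas minimality of the flow requires the orbit of \emph{every} point of the flow phase space to be dense, including states $(q,\psi)$ based at interior points of the table. To reduce such a state to the boundary dynamics you must show that its trajectory collides with some tree at some forward or backward time; if it never does, its full orbit is a straight line, which is certainly not dense, and map minimality alone cannot exclude this. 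This is exactly the kind of non-compactness pathology the paper is at pains about. What rescues the statement is the structure of $\G$: each $g\in\G$ lies in $U_{N_{i_j}}(f_{i_j},\varepsilon_{i_j})$ for infinitely many $j$ with $\varepsilon_{i_j}\to0$, and the argument of Section \ref{secescape} shows that once $\min(|\tan\theta|,|\cot\theta|)\ge \varepsilon_{i_j}/(2r)$ any trajectory crossing the $j$-th near-ring must collide with one of its trees; since every ray eventually crosses all large rings, every ray in a non-horizontal, non-vertical admissible direction hits a tree. You need to invoke this (or something equivalent); it is not a formality.

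The same ring argument also repairs, and in fact supersedes, your perturbation of the target point. As written, that step rests on the claim that the set of $p'\in U$ whose backward ray meets a tree is dense, justified by the observation that the shadow of the trees on a line perpendicular to $\phi$ has measure at least $1/2$ per unit interval. A positive lower density for the shadow does not imply the shadow is dense --- its complement could still contain an interval, which is precisely what you must rule out. The claim is true (for instance because $r\ge1/4$ forces every tree to contain the center of its cell, so the shadow contains the projection of the lattice $(1/2,1/2)+\Z^2$, dense for irrational slope), but once the ring argument is in place every backward ray already meets a tree and no perturbation is needed. Relatedly, your parenthetical assertion that rational-slope directions cannot be minimal for the map is not justified and is not proved in the paper; the ring argument only needs $\theta$ to be neither horizontal nor vertical, so it is cleaner to avoid relying on irrationality of $\tan\theta$ altogether.
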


We would like to point out that there is an old theorem of Gottshalk that (a stronger version of) minimality is impossible in locally compact spaces \cite[Theorem B]{Go}; more precisely for a homeomorphism of a locally compact metric space $X$, if the forward orbit of every point $y \in Y$ is dense in $Y$, then $Y$ is compact.
This result does not apply directly to our situation:  our map is not a homeomorphism, 
the dynamics is not defined everywhere, and where it is not defined it is discontinuous.  There is a standard way of changing the topology to make the map a homeomorphism (this construction is well described in the context of
interval exchanges in \cite[Section 2.1.2]{MMY}). For any wind tree table 
$g \in \az$, including the periodic ones the  topology obtained from $\Omega^g_\theta$ will be locally compact, thus Gottshalks result apply, the wind tree model can never be forward minimal. In fact, in Theorem \ref{t1}.i) we construct examples of escape orbits.

\section{Notations and preparatory remarks}\label{secnot}

{As already mentioned in previous section the billiard map $\Tt^g$ is not defined at a point whose next collision is with a corner, and the inverse billiard map $(\Tt^g)^{-1}$ is not defined at a corner.
In the world of billiards or (flat surfaces), a saddle connection is a flow-orbit going from a corner of a tree to some corner (maybe the same one). Because of the above convention, for the map there is a saddle connection  starting at a point $x$ if, for some $k\ge 0$, $T^k(x)$ is defined but $T^{k+1}(x)$ and $T^{-1}(x)$ are not defined;  then the {\em saddle connection} is the orbit $$\{x,Tx,T^2(x),\dots,T^k(x)\}.$$}
 
 A direction $\theta$ is called \emph{exceptional} if there exists a saddle connection for $\Tt^g$. As there are countably many corners, there are at most countable many saddle connections and thus at most countably many exceptional directions.
 
 For any positive integer $N$,  we define $\Rb{N}$ to be the closed rhombus (square) $\{(x,y):|x|+|y|\le N+\frac12\}$ and we define then $\Ob{g}{N}$ to be  $ \Omega^g_\theta \cap (\Rb{N}  \times \{ \pm \theta, \pm(\theta-\pi)\})$. 
 Let $E_N$ the set of pairs $(\i)$ so that the interior of the $(\i)$-th cell is contained in $\Rb{N}$, and let $\Rn{N}$ be the interior of the union of the closed cells indexed by $E_N$.
Let us also define  $\On{g}{N}$ to be $ \Omega^g_\theta \cap (R_N \times \{ \pm \theta, \pm(\theta-\pi) \})$.

Suppose that $N$ is an integer satisfying $N \ge 2$. We will call a parameter  \emph{$N$-tactful} if for each cell inside the rhombus $\Rb{N}$, the corresponding tree
is contained in the interior of its cell. We will call an $N$-tactful parameter \emph{$N$-ringed}, if
the boundary of $\Rb{N}$ is completely covered by trees.
We call a parameter  \emph{tactful} if it is \emph{$N$-tactful} for all $N$.

For $N$-ringed parameters there is a compact connected rational billiard table $\Rb{N} \cap B^f$, called the \emph{$N$-ringed table},
contained in the rhombus $\Rb{N}$  (see Figure \ref{fig1}). 
The  corresponding phase space is $\Ob{f}{N}.$ It contains $\On{f}{N}$ which is compact for any $N$-tactful parameter. 
A direction $\theta$ is called {\em $(f,N)$-exceptional} if there is a saddle connection inside $\Ob{f}{N}$. 

There are at most countably many exceptional directions, and for all non-exceptional directions, $\Ob{f}{N}$ is a  minimal set for the billiard map  $\Tt^f$. {(We reprove this result in our context in Corollary \ref{c9} of the Appendix.)}

\begin{figure}[t]
\begin{minipage}[ht]{0.5\linewidth}
\centering
\begin{tikzpicture}[scale=0.75]
\foreach \i in {-1.5,-1.0,...,0.5}
{\draw[thin] (\i-1,\i-0.5+2) rectangle +(0.5,0.5);
\draw[thin] (\i+1.5,\i-1.0) rectangle +(0.5,0.5);
\draw[thin] (\i-1.0,-\i-2) rectangle +(0.5,0.5);
\draw[thin] (\i+1.5,-\i+0.5) rectangle +(0.5,0.5);}
 
 \draw[thin] (0.1,0.2) rectangle +(0.5,0.5);
 \draw[thin] (0.3,-0.7) rectangle +(0.5,0.5);

 \draw[thin] (-0.8,-0.6) rectangle +(0.5,0.5);
\draw[thin] (-0.7,0.4) rectangle +(0.5,0.5);

 \foreach \i in {-3,-2,...,2}
 \foreach \j in {-3,-2,...,2}
\draw[dotted](\i,\j) rectangle +(1,1);

\end{tikzpicture}
\caption{A 2-ringed configuration.}\label{fig1}
\end{minipage}\nolinebreak
\begin{minipage}[ht]{0.5\linewidth}
\centering
\begin{tikzpicture}[scale=0.75]
\foreach \i in {-1.5,-0.5,0.5}
{\draw[thin] (-0.05 + \i-1,\i-0.5+2+ 0.07) rectangle +(0.5,0.5);
\draw[thin] (0.06 + \i+1.5,\i-1.0 -0.06) rectangle +(0.5,0.5);
\draw[thin] (-0.07 + \i-1.0,-0.03 -\i-2) rectangle +(0.5,0.5);
\draw[thin]  (  0.05+ \i+1.5,    -\i+0.5+0.05 ) rectangle  +(0.5,0.5);}

\foreach \i in {-1.0,0}
{\draw[thin] (0.05 + \i-1,\i-0.5+2 - 0.05 ) rectangle +(0.5,0.5);
\draw[thin] (-0.04 + \i+1.5,0.05 + \i-1.0) rectangle +(0.5,0.5);
\draw[thin] (+0.05 +\i-1.0,-\i-2 +0.05 ) rectangle +(0.5,0.5);
\draw[thin]  ( - 0.05+ \i+1.5,    -\i+0.5- 0.05 ) rectangle  +(0.5,0.5);}
 
 \draw[thin] (0.1,0.2) rectangle +(0.5,0.5);
 \draw[thin] (0.3,-0.7) rectangle +(0.5,0.5);

 \draw[thin] (-0.8,-0.6) rectangle +(0.5,0.5);
\draw[thin] (-0.7,0.4) rectangle +(0.5,0.5);

 \foreach \i in {-3,-2,...,2}
 \foreach \j in {-3,-2,...,2}
\draw[dotted](\i,\j) rectangle +(1,1);

\end{tikzpicture}
\caption{A small perturbation.}\label{fig2}
\end{minipage}\nolinebreak

\end{figure}

We need to describe  $\On{g}{N}$  more concretely for any $N$-tactful $g$.
 Note
that if the tree is contained in the interior of a cell then if $s$ is a corner of this tree there are three directions pointing to the interior of the table, 
while for all other $s$ there are only two such directions  (see Figure \ref{fig3}). Intersecting trees can have slightly different behavior, but we do not need to describe it since they will not occur in our proof.

 We think of  the contribution of each tree to $\On{g}{N}$ as the union of four closed intervals indexed by $\phi \in \{  \pm \theta, \pm(\theta-\pi) \}$, each of these
intervals corresponds to the cartesian product of  the two intersecting sides  of the tree with a fixed inner pointing direction  $\phi$ (see Figure \ref{fig3}) (as before the word inner means pointing into the table, so
away from the tree). In the proof we will think of each of these intervals as  $I=[0,2\sqrt2r]$ since it corresponds to the diagonal (of length $2\sqrt2r$) of the tree centered at $(a,b)$. Note that the billiard map in a fixed direction has a natural invariant measure. Let $p$ be the arc-length parameter on $I$, then $I=I_1\cup I_2$ where each $I_i$ is an interval and the invariant measure is of the form $K_i\operatorname{d}p$ on $I_i$ with $K_i$ an explicit constant. We stick to the use of $I$ in order to avoid manipulating the constants $K_i$ (which are direction dependent) all the time.

To make our map orientation preserving we choose the orientation
of these intervals in the following way;  use the clockwise orientation inherited from the tree for $\phi \in \{\theta, \theta - \pi\}$ and the counterclockwise orientation of the other two values of $\phi$.
In particular this parametrization does not depend on the angle $\phi$. (See figures \ref{fig3},\ref{fig4}). Despite that fact that these ``intervals'' come naturally as subsets of $\R^2$, we will think of $\On{g}{N}$ and $\Omega^{g}_{\theta}$ as formal 
disjoint union of one-dimensional intervals.

For any $N$-tactful $g$, let $\mathcal J^g_N$ be the collection of all the intervals as described  
arising from the trees in $\Rn{N}$. Note that the trees straddling the rhombus do not contribute to
this collection.

\begin{figure}[t]

\begin{minipage}[ht]{0.4\linewidth}
\centering
\begin{tikzpicture}[scale=1.25]

\draw[] (0,0) rectangle +(2,2);

\draw [](1,2) edge[->]  (1.3,2.3) ;
\draw [](1,2) edge[->]  (0.7,2.3) ;

\draw [](1,0) edge[->]  (1.3,-0.3) ;
\draw [](1,0) edge[->]  (0.7,-0.3) ;

\draw [](0,1) edge[->]  (-0.3,1.3) ;
\draw [](0,1) edge[->]  (-0.3,0.7) ;

\draw [](2,1) edge[->]  (2.3,1.3) ;
\draw [](2,1) edge[->]  (2.3,0.7) ;

\draw [](2,2) edge[->]  (1.7,2.3) ;
\draw [](2,2) edge[->]  (2.3,2.3) ;
\draw [](2,2) edge[->]  (2.3,1.7) ;

\draw [](0,2) edge[->]  (-0.3,2.3) ;
\draw [](0,2) edge[->]  (0.3,2.3) ;
\draw [](0,2) edge[->]  (-0.3,1.7) ;

\draw [](0,0) edge[->]  (-0.3,0.3) ;
\draw [](0,0) edge[->]  (0.3,-0.3) ;
\draw [](0,0) edge[->]  (-0.3,-0.3) ;

\draw [](2,0) edge[->]  (1.7,-0.3) ;
\draw [](2,0) edge[->]  (2.3,0.3) ;
\draw [](2,0) edge[->]  (2.3,-0.3) ;

\end{tikzpicture}
\caption{The phase space of one tree.}\label{fig3}
\end{minipage}\nolinebreak
\begin{minipage}[ht]{0.6\linewidth}
\centering
\begin{tikzpicture}[scale=0.75]

\draw[red,directed] (0,2.3) -- (0,4.3);
\draw[red,directed] (0,4.3) -- (2,4.3);
\draw [red](1,4.3) edge[->]  (0.7,4.6) ;
\draw [red](0,3.3) edge[->]  (-0.3,3.6) ;

\draw[blue,directed] (4.3,2) -- (4.3,0);
\draw[blue,directed] (4.3,0) -- (2.3,0);
\draw [blue](3.3,0) edge[->]  (3.6,-0.3) ;
\draw [blue](4.3,1) edge[->]  (4.6,0.7) ;

\draw[brown,directed] (0,2) -- (0,0);
\draw[brown,directed] (0,0) -- (2,0);
\draw [brown](1,0) edge[->]  (0.7,-0.3) ;
\draw [brown](0,1) edge[->]  (-0.3,0.7) ;

\draw[directed] (4.3,2.3) -- (4.3,4.3);
\draw[directed] (4.3,4.3) -- (2.3,4.3);
\draw [](3.3,4.3) edge[->]  (3.6,4.6) ;
\draw [](4.3,3.3) edge[->]  (4.6,3.6) ;

\end{tikzpicture}
\caption{The phase space is the disjoint union of four closed oriented ``intervals''.}\label{fig4}
\end{minipage}

\end{figure}

 For each tree $t\in \A$ let $U(t,\varepsilon)$ be the the standard $\varepsilon$-neighborhood in $\R^2$ intersected with the interior of $\A$ in $\R^2$.  For any parameter $g=(t_\i)\in\A^{\Z^2}$, consider the open cylinder set $U_N(g,\varepsilon)=\prod_{(\i)\in E_N} U(t_\i,\varepsilon)$.

\section{Proof of Minimality in Theorem \ref{t1}}\label{secmin}

\begin{proof}
{  
In the Appendix we study a class of maps called eligible maps.
The proof of minimality in the theorem is based on 
Lemma \ref{l4} of the Appendix which gives a necessary and sufficient condition for the minimality of an eligible map.
We start by some remarks on the applicability of this lemma, first of all }
 note that in the proof we will only need
to apply  Lemma \ref{l4}
to maps which are tactful. 
However these maps are not eligible.
$\Omega^g_\theta$ is a disjoint union of closed intervals, if we restrict the billiard map to the interior of these intervals it becomes an eligible map
and we can apply the lemma. More precisely we apply Lemma  \ref{l4}
 to the
map $\tTt^g$ which is the map $\Tt^g$ restricted to  $\Omega^g_\theta$ with endpoints of each
interval in $\mathcal J^g_N$ removed.  We call this union of open intervals 
$\tilde \Omega^g_\theta$.   Note that $\Tt^g$ being minimal is equivalent to 
$\tTt^g$ being minimal since the $\Tt^g$-orbit of any corner $x$ is the union of $\{x\}$ with the $\tTt^g$-orbit of $\Tt^g(x)$.

By Lemma \ref{l4} for any tactful $g$, the map $\tTt^g$ being minimal is equivalent  to the statement: for any interval $I \subset \tilde \Omega^g_\theta$  we have
$\bigcup_{k\in\Z} {(\tTt^g)}^k(I)$ covers the whole space 
$\tilde \Omega^g_\theta$. It is enough to show that this happens for a finite union of iterates of $I$.  More precisely it is enough to show that we have sets 
$C_n \subset \tilde \Omega^g_\theta$ satisfying $C_n \subset C_{n+1}$ and 
$\cup_{n \ge 1} C_n = \tilde \Omega^g_\theta$ , such that
$$\forall n \ge 1 \, \forall I  \subset \tilde \Omega^g_\theta \, \exists K,L \text{ s.t }  \bigcup_{k = K}^L {(\tTt^g)}^k(I) \supset C_n.$$
Furthermore it suffices to show this for a countable basis of intervals.

By Corollary \ref{c7} in the Appendix for any $N$-ringed $f$, any $(f,N)$-non-exceptional direction $\theta$ and any interval $I \subset \tOn{f}{N}$, there exists $K,L$ such that 
 \begin{equation}
 \bigcup_{k =K}^L (\tTt^f)^k(I) \supset \tOn{f}{N}.\label{e1}
 \end{equation} 
 
 Now consider the following perturbation of $f$, the new configuration $g$ is arbitrary in the cells which do not intersect $\Rb{N}$,  each tree $(a_\i,b_\i)$ in $\Rn{N}$ is replaced with a tree
 $(a'_\i,b'_\i)$ which is sufficiently close to $(a_\i,b_\i)$ in such a way that the new parameter is still $N$-tactful and the trees in the cells covering the boundary of $\Rb{N}$ are replaced by close trees in such a way that the configuration is $N+1$-tactful (see Figure \ref{fig2}).

The main idea is if Equation \eqref{e1} holds for an  open interval $I$ in $\tOn{f}{N}$ there exists an $\varepsilon>0$ such that  for all $g\in U_N(f,\varepsilon)$, the  Equation \eqref{e1} still holds for $g$ and $I$, namely
 $$
 \bigcup_{k =K}^L \left(\tTt^g\right)^k(I) \supset \tOn{g}{N}.
 $$
 Here we can write the same interval $I$ since 
there is a natural identification between $\tOn{g}{N}$ and $\tOn{f}{N}$ which will be made explicit in the proof.


We remind that each tree contributes four intervals to the phase space of the wind-tree transformation in a given direction. Each of these intervals is a copy of the interval $[0,2\sqrt2r]$. Since in $\tOn{g}{N}$ only the trees contained in $\Rn{N}$ are contributing, $\mathcal J^g_N$ is a collection of $4\operatorname{card}(E_N)=8N(N-1)$ copies of this interval.

For any $N$-tactful $g$, $\mathcal I^g_{N,\theta}$ will be the union of all open coverings
\begin{equation}\label{e1,5}
\left\{\left(\frac{\sqrt2r}{2^N}(i-1),\frac{\sqrt2r}{2^N}(i+1)\right)\cap(0,2\sqrt2r): i=0,\dots,2^{N+1}\right\}
\end{equation}
of $(0,2\sqrt2r)$, one such open covering for each copy of the interval $[0,2\sqrt2r]$ which appears in $\mathcal J^g_N$. 
Note that we have remove the endpoints of the interval $[0,2\sqrt2r]$
because of the discussion at the beginning of the proof of the applicability of
the results in the Appendix.
Thus $\mathcal I^g_{N,\theta}$ is a finite collection of open intervals in $\tOn{g}{N}$. Note also that $\bigcup\limits_N\mathcal I^g_{N,\theta}$ is a topological basis of $\tilde\Omega^g_\theta$. We will call the endpoints of intervals in $\mathcal I^g_{N,\theta}$ \emph{dyadic points} (the endpoints $0$ and $2\sqrt{2}r$ included). 


We will say that there is a *saddle connection  starting at a point $x\in\Omega^g_\theta$ if for some $k\ge 0$ we have: $\big(\Tt^g\big)^i(x)$ is defined and is not a dyadic point for all $0< i\le k$, and $\big(\Tt^g\big)^{k+1}(x)$ and $\big(\Tt^g\big)^{-1}(x)$, if defined, are dyadic points ;  then the {\em *saddle connection} is the orbit segment 
$$\left\{x,\big(\Tt^g\big)x,\big(\Tt^g\big)^2(x),\dots,\big(\Tt^g\big)^k(x)\right\}.$$
A direction $\theta$ is called {\em $((g,N))$-*exceptional} if there is a *saddle connection inside $\Ob{g}{N}$. There are at most countably many *exceptional directions.  Any non-*exceptional direction is non-exceptional, thus Equation \eqref{e1} still holds for any non-*exceptional direction $\theta$,  for $f$ an $N$-ringed configuration.

To prove that the billiard map $\tTt^g$ in a given direction is minimal, it suffices to show that there exists infinitely many $ N $ such that 
\begin{equation}\label{e2}
\exists K,L\;\forall I \in \mathcal I^g_{N,\theta}\bigcup_{k=K}^L {(\tTt^g)}^k(I)\supset \tOn{g}{N}. 
\end{equation}

For any $N$-tactful $g$, let $\mathcal C^g_N(K,L)$ be the collection of all the connected components of $(\tTt^g)^k(I)\cap \tOn{g}{N}$ where $k$ varies from $K$ to $L$. These intervals are open intervals. Each interval $I'$ in $\mathcal C^g_N(K,L)$ is a connected component of $(\tTt^g)^k(I)$ for some $k$ between $K$ and $L$, and some $I\in\mathcal I^g_{N,\theta}$. Thus, for this $k$, $(\tTt^g)^{-k}(I')$ is an interval in $\tOn{g}{N}$, we will call  $\mathcal D^g_N(K,L)$ the collection of all such intervals.

Note that  $\tOn{g}{N}$ and $\tOn{f}{N}$ for every $N$-ringed parameter $f$ and every $N$-tactful $g$ are formally identical. In particular this is true for any $g$ in the cylinder set $U_{N}(f,\varepsilon)$ (defined at the end of the previous section).

By Baire's theorem the set of configurations which are tactful 
is dense since for each $N$ the set of all $N$-tactful configurations is an open dense set.  
Thus we can consider a countable dense set of parameters which are $N$-tactful for all $N$. By modifying the parameters we can assume that each one is $N$-ringed for a certain $N$ still maintaining the density.  Call
this countable dense set  $\{f_i\}$, with $f_i$ being $N_i$-ringed.
We also assume  $N_{i+1} > N_i$.
Suppose $\varepsilon_i$ are strictly positive. Let
$$\mathcal G:=\bigcap_{m \ge 1}\bigcup_{i \ge m}U_{N_i}(f_i,\varepsilon_i).$$
Clearly $\mathcal G$ is a dense $G_\delta$. We claim that there is a choice for $\varepsilon_i$ such that every parameter in $\mathcal G$ gives rise to a wind tree which is minimal in almost all directions.

Fix $f_i$. We already proved that Equation \eqref{e2} holds for $g=f_i$, $N= N_i$ and $\theta$ any direction which is not $(f_i,N_i)$-exceptional (c.f.\ Equation \eqref{e1}). Let $K_i=K_i(\theta)$, $L_i=L_i(\theta)$ be the two integers given by Equation \eqref{e2}. For sake of simplicity, we will denote $\mathcal C_i:=\mathcal C^{f_i}_{N_i}(K_i,L_i)$ the collection of intervals in the covering in Equation \eqref{e2} and we will denote $\mathcal D_i:=\mathcal D^{f_i}_{N_i}(K_i,L_i)$ the collection  defined in the paragraph after Equation \eqref{e2}. The collection of intervals $\mathcal C_i$ is an open cover of the open set $\tOn{f_i}{N_i}$.  We  denote by $\bC$ the set of endpoints in $\Omega^{f_i}_\theta$ of the intervals in $\mathcal C_i$. 

We describe the set $\bC(\theta)$ exactly. Without loss of generality let suppose $K_i(\theta)<0$ and $L_i(\theta)>0$. First, let us consider the set $\On{f_i}{N_i}\setminus \big(\Tt^{f_i}\big)^{K_i}\left(\On{f_i}{N_i}\right)$ which is just the collection of points $x$ whose forward iterate $\big(\Tt^{f_i}\big)^{k}(x)$ is not defined for some time $k\le-K_i(\theta)$, and similarly consider $\On{f_i}{N_i}\setminus \big(\Tt^{f_i}\big)^{L_i}\left(\On{f_i}{N_i}\right)$ for backward orbits. Second, let us consider the following sub-collection of the forward orbits of corners of trees $\big\{\tTt^k(x) : x\in \partial\mathcal J_{N_i} ,\;0\le k\le L_i\big\}$, and similarly for backward iterates. Third, consider the iterates of the endpoints of $I$ for times between $K_i(\theta)$ and $L_i(\theta)$. Then $\bC(\theta)$ is the restriction of the union of these three collections to $\On{f_i}{N_i}$. For each $\theta$ this is a finite collection of points. Note that the each point in in $\bC(\theta)$ is the endpoint of exactly one interval in $\mathcal{C}_i(\theta)$ because $\theta$ is not $(f_i,N_i)$-*exceptional. As we vary the parameters $(g,\theta)$, clearly $\bC(\theta)$ will change. 
Moreover, if the direction $\theta$ is a non-$(f_i,N_i)$-*exceptional direction, then 
the points in the set change continuously in the following sense:  each point in $\bC(\theta)$ has $(f_i,\theta)$ as a point of continuity. 

Let $\Theta_i$ be the set of all directions $\theta$ who are not $(f_i,N_i)$-*exceptional. This set is of measure one since its complement is countable. For every $\theta\in\Theta$ the points in the collection $\bC(\theta)$ are all distinct. Recall that $\On{f_i}{N_i}$ is a union of  oriented intervals, so
the intersection of $\bC(\theta)$  with any of these intervals 
has a natural total order (which is a strict order). These orders induce a strict partial order in $\bC(\theta)$.

So, for every fixed $\theta\in\Theta_i$ it is possible to choose continuously $\delta_i(\theta)>0$ such that the strict partial order on $\bC(\theta')$ is preserved for all $(g,\theta')$ in the open set $ U_{N_i}(f_i,\delta_i(\theta))\times B(\theta,\delta_i(\theta))$ (where $B$ denotes a ball in $S^1$); and thus, Equation \eqref{e2} holds with the same $K_i(\theta)$ and $L_i(\theta)$ for every such $(g,\theta')$.

Furthermore, let us suppose now that $K_i(\theta)$ and $L_i(\theta)$ are optimal for Equation \eqref{e2} to hold.
{ By this we mean the $-K_i(\theta) + L_i(\theta)$ is minimal, and then if there
are several choices $K_i(\theta)$ is chosen maximal still satisfying $K_i(\theta) < 0$.}
 Then $$\Theta_{K,L,M,i}:=\{\theta\in\Theta_i:K_i(\theta)\ge K , L_i(\theta)\le L,\delta_i(\theta)>\frac1M\}$$ is an open set.  
Since $\Theta_i=\bigcup_{K\le0}\bigcup_{L\ge 0}\bigcup_{M\ge 1}\Theta_{K,L,M,i}$ is an increasing union of $\Theta_{K,L,M,i}$ and is of full measure, there exists $K_i,L_i,M_i$ such that $\hat \Theta_i:=\Theta_{K_i,L_i,M_i,i}$ is an open set  of measure larger than $\textstyle\frac1{N_i}$. 
Equation \eqref{e2} holds with the constants $K_i, L_i$ for every $\theta\in\hat \Theta_i$.
Thus $$\Theta:=\bigcap_{N\ge 1}\bigcup_{\{i\ge N\}}\hat \Theta_i$$
is a $G_\delta$-dense set of full measure. Without loss of generality, we suppose $M_i$ is increasing.  If we choose $\varepsilon_i=\frac{1}{M_i}$ in the definition of $\mathcal G$, then this is a set of minimal directions for all tables $g\in\mathcal G$. 
\end{proof}

\section{Proof of the existence of escape orbits.}\label{secescape}

\begin{proof} We will proof the existence of an escape orbit for the parameter set $\mathcal G$ and direction set $\Theta$ defined in the proof of minimality in Section \ref{secmin}. 
For any  $g \in \mathcal G$ and $\theta\in\Theta$, as shown above $\theta$ will be a minimal direction  of $T =\Tt^g$.

Let consider $f$ an $N$-ringed parameter and $\varepsilon>0$.  Let be $N'>N
$ and $f'$ an $N'$-ringed parameter in $U_N(f,\varepsilon)$. Then, for any $\varepsilon'>0$, the set $U_N(f,\varepsilon)\cap U_{N'}(f',\varepsilon')$ is non-empty, moreover if $\varepsilon'$ is small enough then $U_{N'}(f',\varepsilon')\subset U_{N}(f,\varepsilon)$. Let us now fix such an $\varepsilon'$, and let $\theta$ be a direction which is far for horizontal and vertical in the following sense: $\min(|\tan(\theta)|,|\cot(\theta)|)\ge \frac{\varepsilon'}{2r}$. Then for any $g\in U_{N'}(f',\varepsilon')$ we have that $\On{g}{N'+1} \setminus \On{g}{N'}$ is visited by any orbit starting in $\On{g}{N'}$ before reaching $\Omega^g\setminus\On{g}{N'+1}$ since it makes a collision with one of the squares in the ring. In particular for any point $x\in\On{g}{N}$ its orbit cannot escape to $\Omega^g\setminus\On{g}{N'+1}$ without a collision in the ring of obstacles $\Lambda^g_{\theta,N'}:=\On{g}{N'+1}\setminus\On{g}{N'}$. Similarly in the opposite way, an orbit that is already outside cannot come inside without a collision on the ring of obstacles.

Let $g\in \mathcal G$ and let us consider an approximating sequence $f_{i_j}$ such the $g \in U_{N_{i_j}}(f_{i_j},\varepsilon_{i_j})$ for all $j$. For simplicity of notation let $\Lambda_j=\Lambda^g_{\theta,N{i_j}}$. For any non horizontal and non vertical direction $\theta$, the condition $\min(|\tan(\theta)|,|\cot(\theta)|)\ge \frac{\varepsilon_{i_j}}{2r}$ discussed above  is verified for $j$ large enough since $\varepsilon_i$ is a decreasing sequence going to zero.
Thus, we can choose $J$ so large that for any $j \ge J$, the $\varepsilon_{i_j}$ is sufficiently small  such that for any $x \in \On{g}{N_{i_j}}$
the $T$ orbit of $x$ must visit the set $\Lambda_j$ before reaching $\Lambda_{j+1}$. The same is true in the opposite direction: no orbit can go from $\Omega^g_{\theta}\setminus\Omega^g_{\theta,1+N_{i_{j+1}}}$ to $\Lambda_{j}$ without a collision on $\Lambda_{j+1}$. (A similar statement holds for $T^{-1}$. )

Thus the far away dynamics of $T$ can be understood via the following transformations. 
For any $j>J$ and any $x\in \Lambda_j$, let $S^+(x)$ be $T^k(x)$, the first visit to $\Lambda_{j\pm1}$. 
(Note that this is not a first return map to the union of the $\Lambda_j$ and it is not invertible.)  
Similarly, $S^-(x)$ for any $x$ in $\Lambda_j$ is $T^{-k}(x)$ where $k$ is  minimal such that $T^{-k}(x)\in \Lambda_{j\pm1}$.  
For $x\in \Lambda_j$, let $R^+(x)=T^{k'}(x)$ where $k'$ is the maximal $i\ge0$ so that $x,T(x),\dots,T^i(x)\in \Lambda_j$. 
Similarly we define for $x\in \Lambda_j$, $R^-(x)=T^{-k}(x)$ where $k$ is the maximal $i\ge0$ so that $x,T^{-1}(x),\dots,T^{-i}(x)\in \Lambda_j$. 

Where defined, these transformations satisfy: 
\begin{equation}\label{e676}
\begin{array}{rcccl}
R^-\circ S^+&=&S^+&=&R^-\circ R^+\circ S^+,\\ R^+\circ S^-&=&S^-&=&R^+\circ R^-\circ S^-,
\end{array}
\end{equation}
\vspace{-0.45cm}
$$
\begin{array}{rclcrcl}
S^-\circ S^+&=&R^+&\text{and}&S^+\circ S^-&=&R^-.
\end{array}
$$

Now suppose $\theta\in\Theta$. Note that $\theta$ is not vertical  nor horizontal because these directions are $(f_i,N_i)$-exceptional for every $i$. Consider the compact set $\On{g}{M_j}$ for $j\ge J$.  Let 
$$A_{j,1}  := \left\{x \in \Lambda_j: \ S^{+ }(x)\in\Lambda_{j+1} \right\}.
$$ 
The set $A_{j,1}$ is non-empty since the (forward) $T$-orbit of any corner of a tree in $\Lambda_j$ is dense in $\Omega^g_\theta$, thus it has to get out of $\On{g}{j+1}$ and in doing so it is forced to have a collision in $\Lambda_{j+1}$. Thus the last time this orbit visits $\Lambda_j$ before visiting $\Lambda_{j+1}$ will be an element of $A_{j,1}$. Now inductively define the  set
\begin{eqnarray*}
 A_{j,n+1} := \Big \{ x \in A_{j,n} & : & \exists k > 0, \text{ such that }  \forall i = 1,2,\dots,k-1, \\
 && S^i(x)\not\in\Lambda_j \text{ and } S^kx  \in \Lambda_{j+n}\Big \}.
 \end{eqnarray*}
For each $n \ge 1$ the set $A_{j,n}$ is non-empty by a similar reasoning as above.
Clearly $\overline A_{j,n+1} \subset  \overline A_{j,n}$, thus since $\Lambda_j$ is compact, $B_j := \cap_{n \ge 1}  \overline A_{j,n}$ is non-empty.
We claim that if $x$ is in this intersection, then $x$ is in all of the $A_{j,n}$, and thus the forward
orbit of $x$ never returns to $\Lambda_j$.

Suppose not, then let $m :=  \min \{n \ge 1 : x \in \overline A_{j,n} \setminus A_{j,n}\}$. This implies that 
 for some $k$,  $T^i(x) \not \in \Lambda_{j}$ for $i=1,\dots,k-1$ and $T^k(x)$ is not defined, in fact $T^k(x)$ would
 arrive at a corner of a tree of the obstacle ring $\Lambda_{j+m}$. More precisely chose 
 a sequence $(x_\ell) \subset A_{j,m}$ such that $x_\ell \to x$ and $T^k(x) \in \Lambda_{j+m}$, then $y = \lim_{\ell \to \infty} T^k (x_\ell)$.
 Since our direction $\theta$ is non-exceptional, the forward orbit of $y$ is infinite.  Furthermore,  we have $y \in A_{j+m,n}$ for all $n$, thus $y$ is a forward orbit which never visits $\Lambda_j$ 
and which is backwards singular.  Since $\Lambda_j$ has non-empty interior (it contains intervals), this contradicts the minimality of $T$, thus the forward orbit of $x$ is not singular.

Since $\Tt^g$ is minimal, every point in $B_{j+1}$ must have come from $\Lambda_j$, thus we have $B_{j+1}\subseteq R^+\circ S^+\big(B_j\big)$. This implies $$S^-\circ R^-\big(B_{j+1}\big)\subseteq S^-\circ R^-\circ R^+\circ S^+\big(B_j\big)=S^-\circ S^+\big(B_j\big)=R^+\big(B_j\big)=B_j$$ for all $j \ge J$; { here the first two equalities use
 the relations in Equations \eqref{e676} and the last equality follows from the definitions of $B_j$
 and $R^+$.
 
 Let $C_{j+n} := \big (S^- \circ R^- \big )^n\big(B_{j+n} \big )$. Iterating the above computation
 shows that these sets are nested, $C_{j+n+1} \subset C_{j+n}$ for all $n \ge 0$.
The set $\cap_{n \ge 0} \overline{C}_{j+n}$ is non-empty since
it is contained in the compact set $\Lambda_j$.
The forward orbit of any point in this set is either singular, or an escape orbit.  The argument that
such forward orbits are non-singular is identical to the one given above for the set $B_j$.}

Finally we remark that if $(s,\theta)$ is a forward escape orbit, then $(s,\theta + \pi)$ is a backward escape orbit, and vice versa.
\end{proof}

\section{Proof of density and local density of periodic points}\label{secperiod}
\begin{proof}

The idea behind the proof is similar to what has been done for minimality in Section \ref{secmin}. We first apply a known result to  $N$-ringed parameters.

In this section, the direction $\theta$ varies in the proof, so we abandon the notation $\Tt^g$ and we note the billiard transformation in the wind-tree by $T^g(s,\theta)$.

 For each  point $x=(s,\theta)\in\Omega^g_\theta$ and each $p$ such that $\big(T^g\big)^p(x)$ exists, let us consider the set of directions for which the orbit starting at $s$ hits the same sequence of sides up to time $p$:
 \begin{eqnarray*}
 \Big \{\theta'  : && \hspace{-1.5em} \big(T^g\big)^i(s,\theta')\textnormal{ and }\big(T^g\big)^i(s,\theta)\\
&& \hspace{-1.5em} \textnormal{ lie on the same side of the same tree for }i=1,\dots,p \  \Big \}.
 \end{eqnarray*}
This set is an open interval, we will note by $\theta^g_-(x,p)$, and $\theta^g_+(x,p)$ the lower and the upper bound of this interval. We also consider the interval $(t^g_-,t^g_+)$ where $t_\pm$ is the spatial coordinate of $\big(T^g\big)^p(s,\theta^g_\pm)$.

Fix a $N$-ringed parameter $f$, and $x=(s,\theta)\in \On{f}{N}$ such that $(T^f)^p(x)$ exists. 
Remember that the identification between the phase space is  a formal identity map discussed in Section \ref{secmin}. Since $(T^g)^p$ is locally continuous at $x$, both $\theta^g_-(x,p)$ and $\theta^g_+(x,p)$, and thus $t^g_-(x,p)$ and $t^g_+(x,p)$ vary continuously with respect to $g$ in a sufficiently small neighborhood of $f$. Let $(s^g_*,\theta):=\big(T^g\big)^p(s,\theta)$, then, in a sufficiently small neighborhood of $f$, $s^g_*$ varies continuously with respect to $g$. 

By definition of $t^g_\pm$, for all $s'\in (t^g_-,t^g_+)$, there exists an orbit starting at $(s,\theta')$ and ending at $(s',\theta')$ for some $\theta'\in \big(\theta^g_-(x,p),\theta^g_+(x,p)\big)$. Now, suppose that $x=(s,\theta)$ is $T^f$-periodic of period $p$. Note that $s^f_*=s\in (t^f_-,t^f_+)$. 
So there exists $\theta^g_*(s)$ such that $\big(s,\theta^g_*(s)\big)$ is $T^g$-periodic and its period is a divisor of $p$.

Furthermore we can assume that this neighborhood $V(x)$ of $f$ is so small that $(s,\theta^g_*)$ is $\frac{1}{N}$-close to $(s,\theta)$ (with respect to a fixed usual norm).

We will use the following theorem
\begin{theorem*}\cite[Theorem 1]{BGKT}
In a rational polygon periodic points of the billiard flow are dense in the phase space.
\end{theorem*}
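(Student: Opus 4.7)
The plan is to deduce the theorem from the Katok--Zemlyakov unfolding to a translation surface, combined with Masur's results on the density of closed-geodesic directions. Let $P$ be a rational polygon, all of whose interior angles lie in $\pi\mathbb{Q}$, and let $G$ be the finite dihedral group generated by the linear parts of the reflections in the sides of $P$. Gluing $|G|$ copies $gP$ of $P$ (one for each $g\in G$) along parallel sides via the natural identifications produces a closed translation surface $X$ together with a finite-to-one projection $\pi\colon X\to P$ that semi-conjugates the straight-line flow on $X$ to the billiard flow on $P$. Since $\pi$ is surjective and sends closed straight-line trajectories to closed billiard trajectories, it suffices to show that the periodic orbits of the straight-line flow on $X$ are dense in the phase space $X\times S^1$.

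The next step rests on two standard facts about translation surfaces. First, by a theorem of Masur, the set of directions $\theta'\in S^1$ for which the straight-line flow on $X$ admits a closed geodesic is dense in $S^1$. Second, any closed geodesic in direction $\theta'$ is contained in a maximal open flat cylinder of parallel closed geodesics of the same length, obtained by parallel-translating the loop perpendicularly to $\theta'$ until it meets a singular point of $X$. Together these facts imply that in a dense set of directions there is a nonempty open subset of $X$ consisting of periodic points of the flow in that direction.

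To conclude, given an arbitrary point $(p,\theta)\in X\times S^1$, I would find a direction $\theta'$ close to $\theta$ and a cylinder of closed geodesics in direction $\theta'$ that passes close to $p$. This last step, which I expect to be the main obstacle, does not follow directly from the density of periodic directions, since an individual cylinder in a nearby direction need not approach an arbitrary $p$. I would handle it by exploiting the density of saddle connections in $X\times S^1$: starting from a long geodesic segment through $p$ in direction $\theta$, one uses that in every sufficiently fine neighborhood of $\theta$ there is a direction $\theta'$ containing a saddle connection one of whose translates meets any prescribed neighborhood of $p$; the flat cylinder of closed geodesics adjacent to that saddle connection then furnishes a periodic point of the straight-line flow in the prescribed neighborhood of $(p,\theta)$. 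Projecting this periodic point by $\pi$ yields a periodic billiard orbit arbitrarily close to $(p,\theta)$, which proves the theorem.
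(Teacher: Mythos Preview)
The paper does not prove this theorem; it quotes it from \cite{BGKT} and uses it as a black box to supply, for each $N$-ringed parameter $f$, a finite $\frac{1}{N}$-dense set of periodic points in $\On{f}{N}$. So there is no ``paper's own proof'' to compare against here.

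Your outline is sound through the unfolding $X\to P$ and Masur's density of cylinder directions, but the decisive third step has a genuine gap. You assert that ``the flat cylinder of closed geodesics adjacent to that saddle connection'' will furnish a periodic point near $(p,\theta)$, but on a general translation surface a saddle connection need not bound any cylinder: in a direction with a single saddle connection whose complement carries a minimal flow there are no periodic orbits in that direction at all. Even when a cylinder is present, nothing in your sketch forces it to pass near the prescribed point $p$; density of saddle-connection \emph{directions} says nothing about the spatial location of the associated cylinders. The phrase ``density of saddle connections in $X\times S^1$'' is also not right as stated: the set of pairs $(q,\theta')$ lying on some saddle connection is a countable union of segments and is not dense in $X\times S^1$.

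The argument actually given in \cite{BGKT} (and, by a different route, in \cite{Vo}) is a closing-lemma argument rather than a saddle-connection argument: for a full-measure set of directions $\theta'$ the straight-line flow on $X$ is minimal (indeed uniquely ergodic), so the forward orbit of $p$ in direction $\theta'$ returns arbitrarily close to $p$; a long nearly closed segment through $p$ is then closed up by a small perturbation of the direction, producing a genuine periodic orbit through a point close to $p$ in a direction close to $\theta$. This is what delivers density in \emph{both} the spatial and directional coordinates, and it is the step you should substitute for your saddle-connection heuristic.
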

This theorem immediately  implies that  the same is true for the billiard map.
In particular periodic points are dense in $\On{f}{N}$. Let $\{x_1,\dots,x_k\}\subset \On{f}{N}$ be a set of $T^f$-periodic points be such that $\{x_1,\dots,x_k\}$ is $\frac1N$-dense in $\On{f}{N}$.
Combining this with the previous paragraph, we conclude that for every $g$ in the neighborhood $V_N(f)= \bigcap_i V(x_i)$, the set of $T^g$-periodic points is at least $\frac2N$-dense in $\On{g}{N}$.

Let $\{f_i\}\subset \az$ be countable and dense, such that each $f_i$ is $N_i$-ringed for some $N_i$.  Let
$$\mathcal G:=\bigcap_{N\ge 1}\bigcup_{\{i:N_i\ge N\}}V_{N_i}(f_i).$$
Clearly $\mathcal G$ is a dense $G_\delta$. We have shown that every parameter in $\mathcal G$ gives rise to a wind tree with dense periodic points.

Now additionally suppose $r$ is rational. In this case, we can use a stronger property on periodic orbits, it is a part of a special case  of Veech's famous theorem  known as the Veech dichotomy:
\begin{theorem*} \cite[Theorem 1.4]{Ve}\cite[Theorem 5.10]{MaTa} If a polygon $P$ is square tiled then every non-singular orbit 
in an non-exception direction is periodic.
\end{theorem*}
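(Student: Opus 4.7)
My plan is to prove the theorem by realizing $P$ as an origami, identifying its Veech group as a lattice, and then invoking the Veech dichotomy. First, I would apply the Katok--Zemlyakov unfolding to the rational polygon $P$, producing a compact translation surface $X_P$ on which the billiard flow lifts to the straight-line flow. Because $P$ is square-tiled, $X_P$ inherits a decomposition into unit squares, and so is naturally a finite translation covering $\pi\colon X_P \to \mathbb{T}^2$ of the standard torus $\mathbb{T}^2 = \mathbb{R}^2/\mathbb{Z}^2$, branched only over the marked point $(0,0)$.

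The second step is to show that $X_P$ is a Veech surface. I would argue that any element of $\mathrm{SL}(2,\mathbb{Z})$ which preserves the finite set of ramification points of $\pi$ lifts to an affine automorphism of $X_P$. The stabilizer of a finite subset of $\mathbb{T}^2$ in $\mathrm{SL}(2,\mathbb{Z})$ has finite index, so the Veech group $\mathrm{SL}(X_P)$ contains a finite-index subgroup of $\mathrm{SL}(2,\mathbb{Z})$ and is therefore itself a lattice in $\mathrm{SL}(2,\mathbb{R})$.

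The heart of the proof is the Veech dichotomy: on a lattice surface the directional flow is, in every direction, either uniquely ergodic or completely periodic. To obtain the periodic half, which is what the statement asserts, I would pick a direction $\theta$ stabilized by a non-trivial parabolic element $\gamma \in \mathrm{SL}(X_P)$ and show that $\gamma$ acts as a product of Dehn twists along maximal cylinders parallel to $\theta$; this forces $X_P$ to split into finitely many parallel cylinders in direction $\theta$, inside each of which every orbit closes up. For an origami, the directions fixed by a parabolic element of $\mathrm{SL}(X_P)$ are exactly the rational slopes, and these coincide with the directions containing saddle connections, which in this square-tiled setting are the \emph{non-exception} directions in the sense of the statement.

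The main obstacle is the parabolic-to-cylinder step: showing that the existence of a non-trivial parabolic affine automorphism fixing direction $\theta$ really does force a cylinder decomposition of $X_P$ in that direction. The standard route is to use that the derivative of $\gamma$ is unipotent with eigenvector along $\theta$, so $\gamma$ preserves every $\theta$-leaf up to a translation; combined with the compactness of $X_P$ and the fact that $\gamma$ permutes the finite singular set, this bounds first-return times to a transversal to $\theta$ and yields the cylinder decomposition. Once the cylinder decomposition is established, periodicity of every non-singular orbit within each cylinder is immediate, completing the proof.
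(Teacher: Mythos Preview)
The paper does not supply a proof of this statement; it is quoted as a special case of the Veech dichotomy and used as a black box in the proof of local density of periodic points. Your outline---unfold to an origami, show the Veech group is commensurable with $\mathrm{SL}(2,\mathbb{Z})$, then invoke the parabolic-implies-cylinder-decomposition half of the dichotomy---is the standard route and is mathematically sound.

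There is, however, a genuine mismatch you have absorbed rather than flagged. By the paper's own definition, a direction is \emph{exceptional} precisely when it contains a saddle connection. On an origami the saddle-connection directions are the rational slopes, and these are exactly the directions your argument shows to be completely periodic. So what you actually prove is periodicity in the \emph{exceptional} directions---the opposite of what the displayed statement literally says. The statement as printed is evidently a slip (in non-exceptional, i.e.\ irrational, directions the flow on an origami is uniquely ergodic and hence aperiodic), and indeed the paper applies the theorem to extract a \emph{countable} dense set $\{\theta_j\}$ of periodic directions, which only makes sense for the exceptional ones. Your closing claim that the saddle-connection directions ``are the \emph{non-exception} directions in the sense of the statement'' is therefore wrong relative to the paper's definitions, even though you have proved exactly the result the paper goes on to use.
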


 We will call a parameter $f$ \emph{rationally $N$-ringed} if $f$ is $N$-ringed  and all $a_\i,b_\i$ are rational for all $(\i)\in E_N$. The key  property here is that for any rationally $N$-ringed parameter  the $N$-ringed table is  square-tiled and thus we can apply Veech's theorem:  there exists a countable dense set $\{\theta_j\}\subset\mathbb S^1$ such that every non-singular point of the form $(s,\theta_j)\in\Omega^f_{\theta_j,N}$ is periodic. 
We call such a direction a \emph{periodic direction}.

We assume $\theta_j$ are enumerated so that the maximal combinatorial length of the periodic orbits inside $\Omega^f_{\theta_j,N}$ is increasing with $j$.
Consider the smallest $\ell(f)$ such that $\theta_1, \dots,\theta_{\ell(f)}$ is $\frac1N$-dense. 

Let $f$ be a rationally $N$-ringed parameter. 
Consider a periodic direction $\theta$ and the set $\On{f}{N}$ with saddle connections removed. We decompose this set into its periodic orbit structure; more precisely
this decomposition consists of a finite collection of intervals permuted by the dynamics such that the boundary of each interval from this decomposition is in a saddle connection. We call this collection of intervals $\mathcal D(f,\theta)$. For each $I\in \mathcal D(f,\theta)$, all points in $I$ are periodic of the same period $p$, and we call $\bigcup_{i=0}^{p-1}\big(T^f\big)^i(I)$ a \emph{periodic cylinder}. 

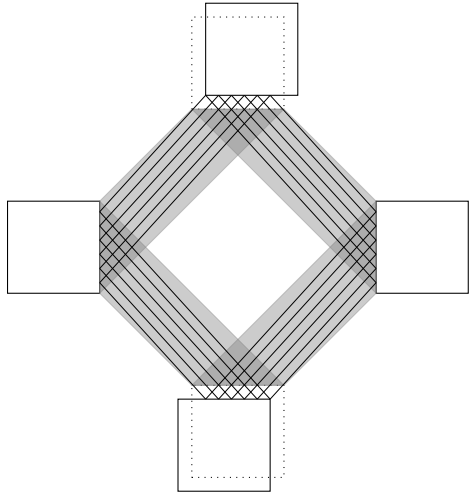
\begin{figure}[tb]
%
%
%
\centering
\begin{tikzpicture}
  \begin{scope}
  [x=1.75cm,y=1.75cm,scale=0.7]
   \draw[dotted] (0,0) rectangle +(1,1);
     \draw[thin] (2,2) rectangle +(1,1);
         \draw[thin] (-2,2) rectangle +(1,1);
           \draw[dotted] (0,4) rectangle +(1,1);
           
    \draw[thin] (0.15,4.15) rectangle +(1,1);
        \draw[thin] (-0.15,-0.15) rectangle +(1,1);
        
  \draw[very thin,fill,color=gray,opacity=0.4] (2,2) -- +(0,1) -- (1,4) -- +(-1,0) -- cycle; 
  \draw[very thin,fill,color=gray,opacity=0.4] (-1,2) -- +(0,1) -- (1,1) -- +(-1,0) -- cycle;
  \draw[very thin,fill,color=gray,opacity=0.4] (0,1) -- +(1,0) -- (2,2) -- +(0,1) -- cycle;
  \draw[very thin,fill,color=gray,opacity=0.4] (0,4) -- +(1,0) -- (-1,2) -- +(0,1) -- cycle;

 \foreach \k in {0,0.2,...,1} \draw[very thin] (2,2.115+0.77*\k)  -- (0.15 + 0.7* \k,4.15); 
\foreach \k in {0,0.2,...,1} \draw[very thin] (-1,2.115+0.77*\k) --   (0.15+ 0.7*\k,0.85); 
 \foreach \k in {0,0.2,...,1} \draw[very thin] (0.15+0.7*\k,0.85)  -- (2,2.885-0.77*\k);
 \foreach \k in {0,0.2,...,1} \draw[very thin] (0.15 + 0.7*\k,4.15)  -- (-1,2.885 -0.77* \k) ;

  \end{scope}
\end{tikzpicture}
\caption{An example of periodic cylinder of length 4 (filled), and  this cylinder after perturbation (striped).}\label{fig5}

\end{figure}

In the general case, we presented a construction that associates to every $T^f$-periodic point $x=(s,\theta)$ and every $g$ in a small enough neighborhood $U_1(s,\theta)$ of $f$, an angle $\theta^g_*(x)$ such that $(s,\theta^g_*(x))$ is $T^g$-periodic. 

Because the periodic points come in cylinders, as described above for $f$, the angles $\theta^g_*(s,\theta)$ and $\theta^g_*(s',\theta)$ will coincide for $s'$ in an open interval  around $s$ (if $g\in U_1(s,\theta)\cap U_1(s',\theta)$).

For each interval $I$ in $\mathcal D(f,\theta)$, we can thus find an interval $I'\subset I$ containing at least $1-\textstyle\frac1{\ell(f)\cdot N}$ proportion of points of $I$ such that the intersection $U_N(f):=\bigcap\limits_{s'\in I'}U_1(s',\theta)$ is open. For all $g\in U_N(f)$ and all $s,s'\in I'$  we have $\theta^g_*(s,\theta)=\theta^g_*(s',\theta)$. 

Furthermore we can assume that this neighborhood $U_N(f)$ of $f$ is so small that $\theta^g_*$ is $\frac{1}{N}$-close to $\theta$ (with respect to a fixed usual norm).

Let $\{f_i\}\subset \az$ be countable dense and such that each $f_i$ is rationally $N_i$-ringed for some $N_i$.  Let
$$\mathcal G:=\bigcap_{N\ge 1}\bigcup_{\{i:N_i\ge N\}}U_{N_i}(f_i).$$
Clearly $\mathcal G$ is a dense $G_\delta$. We claim that every parameter in $\mathcal G$ gives rise to a wind tree with locally dense periodic orbits. For each parameter $g\in \mathcal G$, there exists an infinite subsequence $({f_{i_k}})\subset (f_i)$ such that 
$g\in U_{N_{i_k}}(f_{i_k})$ for all $k$ and $N_{i_k}$ is increasing. For sake of simplicity we denote this subsequence by $(f_k)$. 

Let $m_k$ be the measure of $\bigcup_{I\in \mathcal D(f_k,\theta_j)}I$ (it does not depend on $j$). By definition of $I'$, $\bigcup_{I\in \mathcal D(f_k,\theta_j)}I'$ is of measure at least $\big(1-\textstyle\frac1{\ell(f_k)\cdot N_k}\big)m_k$. Thus $\bigcap_{j=1}^{\ell(f_k)}\bigcup_{I\in \mathcal D(f_k,\theta_j)}I'$ is of measure at least $\big(1-\textstyle\frac1{N_k}\big)m_k$ and thus the complement of the following infinite measure $G_\delta$ set:
$$\bigcap_K\bigcup_{k\ge K}\bigcap_{j=1}^{\ell(f_k)}\bigcup_{I\in \mathcal D(f_k,\theta_j)}I'$$
is of zero measure.\end{proof}

\section{Generalizations}\label{secgen}
Our results hold in a much larger framework.  In the proof of minimality we only used that 
$N$-ringed configurations are dense in the space of all configurations, and that they are rational polygonal billiard tables.
For the local density of periodic orbits we also used that $N$-ringed configurations which are Veech polygonal billiard tables are dense.
Now we give some examples where these properties hold.

1) We stay in the setup discussed in the article but  additionally allow the empty tree denoted by $\emptyset$, thus
the space of parameter is  $\{\emptyset\} \cup  \{(a,b): r \le a \le 1-r, \ r \le b \le 1-r\}.$ The Ehrenfests specifically required that the average distance $A$ between neighboring squares is large compared to $2r$. For any probability distribution $m$ on the continuous part of the space of parameters, if we add a $\delta$ function on the empty tree, then for $c<1$ large enough, the distribution $c\delta+(1-c)m$ verifies almost surely this requirement. However our result tells nothing about a full measure set of parameters for Lebesgue measure. 
 
2) Instead of fixing $r\in [\frac14,\frac12)$, we fix $r$ between $0$ and $\frac12$. If $r\in\big[\frac1{2(n+1)},\textstyle\frac1{2n}\big)$, place at most $n^2$ copies of trees in each cell. We can then form $N$-ringed configuration in a more general sense where we replace the rhombus by an appropriate curve around the origin. One can do so using just $n+1$ copies of the tree in each cell.
 
3) Instead of fixed size squares we use all vertical horizontal squares contained in the unit cell $[0,1]^2$. This set is naturally parametrized by
$$ \{t=(a,b,r): 0 \le a \le 1, 0 \le b \le 1, 0 \le r \le \min(a,b,1-a,1-b)\}$$ where a $2r$ by $2r$ square tree is centered at the point $(a,b)$.
More generally we call a polygon a VH-tree if the sides alternate between vertical and horizontal. For example a VH-tree with 4 sides is a rectangle, with  6 sides is a figure L.  We can use various subsets of VH-trees, for example all VH-trees with at most $2M$ sides ($M \ge 4$ fixed) contained
 in the unit cell.  Or we can use the VH-trees with 12 sides and fixed side length $r \in [1/4,1/3)$ (called $+$ signs). 
 Many other interesting subclasses can be
 considered.
 
 4) Fix a rational triangle $P$, and consider the set of all rescalings of $P$ contained in the unit cell $[0,1]^2$ oriented in such a way that they have either a vertical or horizontal side.
 
 5) One can also change the cell structure to the hexagonal tiling and consider appropriate polygonal trees, for example one can use appropriate classes of
 equilateral triangular trees or hexagonal trees.  

\section{Acknowledgements.}  We thank the anonymous referee whose detailed remarks have greatly improve the article.
We gratefully acknowledge the support of ANR Perturbations and ANR GeoDyM as well as the grant APEX PAD attributed by the region PACA.
This work has been carried out in the framework of the Labex Archim\`ede (ANR-11-LABX-0033) and of the A*MIDEX project (ANR-11-IDEX-0001-02), funded by the ``Investissements d'Avenir'' French Government program managed by the French National Research Agency (ANR)''. 

\appendix
\section{Minimality of discontinuous maps}\label{secmindis}

In this section we develop in a general context the tools we will use for proving minimality of maps with singularities.

\subsection{Definitions}
First, let us make precise the context in which we are using the definition of minimal map. 

\begin{definition}
 Let $X$ be a locally compact metrizable topological space endowed with a Borel measure without atoms. Let $T:X\dashrightarrow X$ be a measure-preserving map. (The dashed arrow stands for the fact that $T$ is possibly not everywhere defined). Let us suppose that  $T$ sends homeomorphically a complement of a discrete set of points to a complement of a (possibly different) discrete set of points. We call such a map \textit{eligible}. 
\end{definition}

\begin{remark}
 If a map $T:X\dashrightarrow X$ is eligible there exists a set $\mathcal S\subset X$ that is discrete and such that $T$ restricted to $X\setminus \mathcal S$ is an homeomorphism. If $\mathcal S_1$ and $\mathcal S_2$ are two such sets, than $\mathcal S_1\bigcap \mathcal S_2$ is also such a set. Indeed, $\mathcal S_1\bigcap \mathcal S_2$ is discrete and $T$ is a homeomorphism on $X\setminus\left(\mathcal S_1\bigcap \mathcal S_2\right)=X\setminus\mathcal S_1\bigcup X\setminus\mathcal S_2$. 
 
 More generally, any intersection of such sets is also such a set. Thus, there exists a minimal discrete set (w.r.t.\ inclusion) such that $T$ is a homeomorphism outside this set.
\end{remark}

\begin{definition}
 Let $T:X\dashrightarrow X$ be an eligible map. We call $\operatorname{Sing}(T)$ the minimal discrete subset of $X$ such that $T$ restricted to $X\setminus \operatorname{Sing}(T)$ is a homeomorphism. We call every point in $\operatorname{Sing}(T)$ a \textit{singularity} of $T$. 
\end{definition}

\begin{remark}
 If $T$ is eligible, then $T^{-1}$ is also eligible and 
\begin{equation}\label{eqT1}\nonumber
 \operatorname{Sing}\left(T^{-1}\right)=X\setminus T\left(X\setminus\operatorname{Sing}(T)\right).
\end{equation}
\end{remark}

 Next, we redefine the notions of images and pre-images of sets in a way that makes clear that we will never apply the eligible transformation on its singular points.
 
\begin{definition}
 Let $T:X\dashrightarrow X$ be an eligible map. For any $A\subset X$, the \textit{image} of $A$ by $T$ is
 \begin{equation}\nonumber
 T(A) :=\{T(x)\;:\;x\in A\setminus\operatorname{Sing}(T)\}.
 \end{equation}
 The \textit{preimage} of $A$ by $T$ is its image by $T^{-1}$, thus:
 \begin{equation}\nonumber
 T^{-1}(A)=\left\{T^{-1}(x)\;:\;x\in A\cap T\left(X\setminus\operatorname{Sing}(T)\right)\right\}.
 \end{equation}
 
 We then define $T^k(A)$ by recurrence for any integer $k$, as follows:  $T^{k+1}(A)=T(T^k(A))$ for any $k\ge 0$; $T^{k-1}(A)=T^{-1}(T^k(A))$ for any $k\le 0$.
\end{definition}

\begin{remark}
The set of singularities, $\operatorname{Sing}(T)$, is closed because it is discrete in a locally compact space. Thus $X\setminus\operatorname{Sing}(T)$ and $T(X\setminus\operatorname{Sing}(T))$ are both open in $X$. It follows that, even with this redefined notion of image and preimage, the image and preimage of an open set are always open. However, we can say nothing about the closedness of the image, or preimage, of a closed set. 
\end{remark}

\begin{definition}
 Let $T:X\dashrightarrow X$ be an eligible map. Let us consider a point $x_0\in X$.
 \begin{itemize}
  \item The \textit{future orbit} of this point is the set of all the positive iterates of $T$ on $x_0$, as long as $T$ is applied to non-singular points. It is noted by $\mathcal O^+(x_0)$, thus:
  \begin{equation}\nonumber
\mathcal O^+(x_0):=\bigcup_{k\ge0}T^k\left(\{x_0\}\right).
  \end{equation}
  \item In a similar way,  the \textit{past orbit} is
  \begin{equation}\nonumber
\mathcal O^-(x_0):=\bigcup_{k\le0}T^k\left(\{x_0\}\right).
  \end{equation}
  \item The \textit{orbit} of a point is the union of its past and future orbit:
  \begin{equation}\nonumber
\mathcal O(x_0):=\bigcup_{k\in\mathbb Z}T^k\left(\{x_0\}\right).   
  \end{equation}
 \end{itemize}
We define the orbit of a set in a similar way. Let $A\subset X$, then
 \begin{itemize}
  \item The \textit{future orbit} of this set is the collection of all the positive iterates of $T$ on $A$. It is noted by $\mathcal O^+(x_0)$, thus:
  \begin{equation}\nonumber
\mathcal O^+(A):=\{T^k\left(A\right): k\ge 0\}.
  \end{equation}
  \item In a similar way,  the \textit{past orbit} is
  \begin{equation}\nonumber
\mathcal O^-(A):=\{T^k\left(A\right): k\le 0\}.
  \end{equation}
  \item The \textit{orbit} of a point is the union of its past and future orbit:
  \begin{equation}\nonumber
\mathcal O^-(A):=\{T^k\left(A\right): k\in\mathbb Z\}.
  \end{equation}
 \end{itemize}
 A \textit{half orbit} is a future or past orbit. 
 
 We say that a point orbit or half orbit is \textit{singular} if it is an orbit or half orbit of a singular point (a point in $\operatorname{Sing}(T)\cup\operatorname{Sing}(T^{-1})$).
 \end{definition}
 
  Remark that the image of a non-empty set may be empty and an orbit may be finite. 

\begin{definition}
 Let $T:X\dashrightarrow X$ be an eligible map. A \textit{connection} is a finite non-periodic orbit. Thus, it is both an orbit of a singularity of $T$ and an orbit of a singularity of $T^{-1}$.
\end{definition}

\begin{remark}
 If $x_0\in X$ is singular for both $T$ and $T^{-1}$, then $\{x_0\}$ is a connection.
\end{remark}

\begin{definition}
 Let $T:X\dashrightarrow X$ be an eligible map. We say that $T$ is \textit{minimal} if and only if every orbit is dense. 
\end{definition}

\begin{remark}
 Let $T:X\dashrightarrow X$ be an eligible map. If $T$ is minimal and has connections, then $X$ is finite. 
\end{remark}

\subsection{Equivalent definition of minimality}

\begin{lemma}\label{l4}
 Let $T:X\dashrightarrow X$ be an eligible map. Then $T$ is minimal if and only if the orbit of every open set covers $X$. 
\end{lemma}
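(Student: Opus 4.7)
The plan is to prove both implications directly from the definitions; the only care needed concerns the convention (fixed earlier in the appendix) that iterated images $T^k(U)$ involve applying $T$ (for $k>0$) or $T^{-1}$ (for $k<0$) only to non-singular points, so that ``$x \in T^k(U)$'' automatically certifies that the corresponding iterate of $x$ exists along an admissible chain.

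For the forward direction, assume $T$ is minimal, and fix an arbitrary point $x \in X$ and a non-empty open set $U \subset X$. By minimality the orbit $\mathcal{O}(x)$ is dense, so it meets $U$; hence there exists $k \in \mathbb{Z}$ with $T^k(x) \in U$. If $k \ge 0$, then all of $x, T(x), \dots, T^k(x)$ exist, so the backward chain $T^{-1}(T^k(x)) = T^{k-1}(x), \dots, T^{-k}(T^k(x)) = x$ is admissible, which gives $x \in T^{-k}(U)$; the case $k < 0$ is symmetric. In either case $x \in \bigcup_{\ell \in \mathbb{Z}} T^\ell(U)$, and since $x$ was arbitrary, the orbit of $U$ covers $X$.

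For the reverse direction, assume that the orbit of every non-empty open set covers $X$, and fix $x \in X$ together with a non-empty open set $U \subset X$; we must exhibit a point of $\mathcal{O}(x) \cap U$. By hypothesis there exists $k \in \mathbb{Z}$ such that $x \in T^k(U)$. When $k \ge 0$, the definition of iterated image furnishes some $y \in U$ for which $y, T(y), \dots, T^k(y) = x$ is an admissible forward chain; tracing it backwards shows that $T^{-k}(x) = y$ is a well-defined element of $\mathcal{O}(x)$ that lies in $U$. When $k < 0$, the same argument applied to $T^{-1}$ gives $T^{|k|}(x) \in U \cap \mathcal{O}(x)$. Either way $\mathcal{O}(x) \cap U \ne \emptyset$, and since $U$ was arbitrary, $\mathcal{O}(x)$ is dense.

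There is no genuine mathematical obstacle here: the two statements essentially express the same relation ``$x$ and some element of $U$ lie on the same orbit'' from the two opposite points of view. The main point requiring vigilance is the asymmetric handling of singularities: because $T^k(U)$ is built by applying $T$ only to non-singular points, membership in $T^k(U)$ automatically supplies the full admissible chain of iterates needed to conclude that the corresponding iterate of $T$ on $x$ is defined. The empty open set is trivially excluded from the statement, since its orbit is empty.
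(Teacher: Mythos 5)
Your proof is correct and follows essentially the same route as the paper's: both directions are obtained by unwinding the convention for iterated images of sets, noting that membership in $T^k(U)$ certifies an admissible chain of non-singular iterates, and that such a chain can be traced backwards because $T$ is a homeomorphism off its singular set. The paper's own argument is the same definitional unfolding, handled with the same case split on the sign of $k$.
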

\begin{proof}
Suppose that $T$ is minimal. Let $U$ be an open set, then the orbit of every point meets $U$. More precisely, for every $x\in X$, there exists $k\ge 0$ such that either $x, T^{-1}(x),\dots, T^{-k+1}(x)$ are non-singular for $T^{-1}$ and $T^{-k}(x)\in U$; or $x, T(x),\dots, T^{k-1}(x)$ are non-singular for $T$ and $T^k(x)\in U$. It follows that for every $x\in X$ there exists an integer $k$ such that $x\in T^k(U)$. Thus, $\mathcal O(U)$ is an open covering of $X$.
 
 Reciprocally, let us suppose that the orbit of every open set is an open covering of $X$. Let us consider $x\in X$ and $U$ an open set. Because the orbit of $U$ covers $X$, there exists an integer $k\ge0$ such that $x\in T^k(U)$ or $x\in T^{-k}(U)$. So, there exists $u\in U$ such that $x=T^k(u)$ and $u,T(u),\dots,T^{k-1}(u)$ are non-singular for $T$; or $x=T^{-k}(u)$ and $u,T^{-1}(u),\dots,T^{-k+1}(u)$ are non-singular for $T^{-1}$. Thus one has  $u=T^{-k}(x)$ or $u=T^k(x)$. Thus $T$ is minimal because the orbit of any point intersects any open set.
\end{proof}

\subsection{Keane's minimality criterion}

Keane has shown that interval exchange transformations with no connections are minimal \cite{Ke}.  Keane's proves this fact with the usual
convention that the IET is defined at singular points
via left continuity. This convention does not agree with our convention that billiard orbits
stop when they arrive at a corner.  Thus we do not define IETs at singular
points, this makes an IET an eligible map. Moreover, Keane considered IETs defined on a single interval while in our context the arising IETs are naturally defined on a finite disjoint union of intervals. More  precisely:

\begin{definition}
An IET is an eligible map $T:X\dashrightarrow X$ where $X \subset \mathbb{R}$ is a finite union of open, bounded intervals whose closures are disjoint, and $T$ is a translation on each connected component of $X\setminus\operatorname{Sing}(T)$.  We call $T$ {\em reducible} if a non-trivial finite union of connected components of $X$ is invariant, and otherwise we call $T$ {\em irreducible}.
\end{definition}

Remark:  the billiard map restricted to $\Omega_{\theta}^g $ as described in the article is not an eligible map.
The billiard map  is defined on a disjoint union of closed intervals, if we restrict it to the interior of these intervals it becomes an eligible map.
Furthermore if we restrict to the inside of a ringed it is 
an IET.

Keane's result remains none the less true with a slight adjustment; for completeness we give a proof here. 
\begin{theorem}\label{keane}
An aperiodic, irreducible  IET $T:X\dashrightarrow X$ with no connections is minimal.
\end{theorem}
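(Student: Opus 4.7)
The plan is to adapt Keane's original argument to this eligible-map framework. Assume for contradiction that $T$ is aperiodic and irreducible with no connections, yet not minimal. By Lemma~\ref{l4}, there is then a nonempty open $U$ whose orbit does not cover $X$, so $K := X \setminus \mathcal O(U)$ is a nonempty proper closed set that is essentially invariant in the sense that $T(K \setminus \operatorname{Sing}(T)) \subset K$ and $T^{-1}(K \setminus \operatorname{Sing}(T^{-1})) \subset K$. Using Zorn's lemma, I would shrink $K$ to be minimal with these properties; irreducibility ensures $K$ is not merely a union of full connected components of $X$ (otherwise we would have an invariant subfamily of components).

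Next, I would analyze the \emph{gaps} of $K$: write $X \setminus K$ as a disjoint union of its maximal open subintervals, and let $L := \sup \{\operatorname{len}(J) : J \text{ a gap}\}$, which is finite (since $X$ is bounded) and strictly positive. The key structural observation is that $T$ is a translation on each connected component of $X \setminus \operatorname{Sing}(T)$ and preserves $K$ essentially, so for a gap $J$ whose interior avoids $\operatorname{Sing}(T)$, the image $T(J)$ is an open interval of length $\operatorname{len}(J)$ contained in $X \setminus K$, hence sitting inside some gap of length at least $\operatorname{len}(J)$. A singularity inside $J$ merely cuts $J$ into finitely many pieces, each landing inside a gap; the analogous statement holds for $T^{-1}$.

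I would then use the no-connection hypothesis combined with aperiodicity to locate a gap $J_0$ of length $L$ whose entire two-sided orbit avoids $\operatorname{Sing}(T) \cup \operatorname{Sing}(T^{-1})$ in its interior. There are only finitely many singularities, and no-connection plus aperiodicity says that every singular orbit is infinite and non-recurrent to singularities; thus only countably many gaps ever meet a singular orbit in their interior. A limiting/compactness argument (using that $X$ is bounded and gaps are translates of subintervals) produces a gap of length exactly $L$, and combined with the countable-exclusion one can select such a $J_0$ with a clean orbit. The forward iterates $T^n(J_0)$ are then all single translated open intervals of length $L$, each contained in (and thus equal to) a gap of length $L$.

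Finally, the gaps $\{T^n(J_0)\}_{n \ge 0}$ are pairwise disjoint open subintervals of the bounded set $X$ of common length $L$, so they must collide: $T^n(J_0) = T^m(J_0)$ for some $0 \le m < n$, whence $T^{n-m}(J_0) = J_0$. The endpoints of $J_0$ are then periodic points of $T$ of period dividing $n-m$, contradicting aperiodicity. The main obstacle is the third paragraph: converting the naive ``generically, some gap has a clean orbit'' into the existence of a specific $J_0$ of maximal length with singularity-free orbit in both time directions, using only the no-connection and aperiodicity hypotheses. This is precisely where no-connections does its essential work, and careful bookkeeping is required when $L$ is only a supremum rather than a maximum; a standard compactness argument on gap positions, combined with the fact that only finitely many gaps can be ``long,'' should close this step.
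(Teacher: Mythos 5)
Your overall strategy --- pass to a proper closed invariant set $K$, study the gaps of $K$, and force a gap of maximal length to be periodic --- is a legitimate classical idea, but the pivotal third step is not established, and the justification you offer for it is vacuous. You need a gap $J_0$ of maximal length $L$ whose entire two-sided orbit avoids $\operatorname{Sing}(T)\cup\operatorname{Sing}(T^{-1})$ in its interior. Since the gaps are pairwise disjoint open subintervals of the bounded set $X$, there are only finitely many gaps of length at least $L/2$, hence only finitely many of maximal length (so $L$ is attained, which is good), and only countably many gaps altogether. Consequently the observation that ``only countably many gaps ever meet a singular orbit in their interior'' excludes nothing: it is perfectly consistent with \emph{every} maximal gap --- indeed every gap --- meeting a singular orbit. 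Nor is this a phantom worry: when $T$ restricted to an invariant piece is minimal and aperiodic (the situation your argument must confront before irreducibility is brought to bear), the infinite singular orbits are dense there and pass through the interior of every gap of every proper closed invariant set. What your argument genuinely yields is only the following: the map $J\mapsto T(J)$, defined on those maximal gaps containing no point of $\operatorname{Sing}(T)$, is an injective partial self-map of the finite set of maximal gaps, so if it were everywhere defined some power of it would fix a gap and produce periodic points; hence some maximal gap must contain a singularity of $T$ (and dually for $T^{-1}$). Converting that into a contradiction with the no-connection hypothesis is exactly the missing content, and it is where the real work of Keane's theorem lies.

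A secondary point: the Zorn's lemma reduction to a minimal invariant $K$ is delicate because $X$ is not compact (a finite union of bounded open intervals), so a decreasing chain of nonempty closed invariant subsets of $X$ may have empty intersection; fortunately you never actually use minimality of $K$. The paper takes a different and complete route: it first proves a one-sided first-return lemma (Lemma~\ref{l-conn}, the classical ``tower'' argument showing that the forward orbit of $x$ returns to any interval having $x$ as an endpoint), and then applies it to a point of $\overline{A}\setminus\operatorname{int}(A)$, where $A$ is the limit set of a putative non-dense orbit, to manufacture a saddle connection and contradict the no-connection hypothesis directly. If you wish to keep the gap-based approach, you must either supply an argument of comparable substance producing a singularity-free maximal gap $J_0$, or abandon $J_0$ and argue directly with the finite chains of maximal gaps and the singular orbits that terminate them.
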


The proof of this theorem uses the following lemma (compare 
with  \cite[Theorem 1.8]{MaTa}).

\begin{lemma}\label{l-conn}
Suppose that $T:X\dashrightarrow X$ is an  IET with no connections and that
$\mathcal{O}^+(x)$ is an infinite non-periodic forward orbit.  If $I$ is an open interval
with endpoint $x$, then $\mathcal{O}^+(x)$ returns to $I$.
\end{lemma}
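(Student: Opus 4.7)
The plan is to argue by contradiction: assume $\mathcal O^+(x)\cap I=\varnothing$, and, possibly after shrinking $I$, write $I=(x,x+\delta)$ inside one connected component of $X$. The main tool is the first-return map $T_I$ to $I$, defined almost everywhere on $I$ by Poincar\'e recurrence (since $T$ preserves a finite non-atomic Borel measure and $I$ has positive measure). A standard fact about IETs then gives that $T_I$ is itself an IET with finitely many continuity components $I_1,\dots,I_m\subset I$, and on each $I_k$ the return time $n_k$ is constant with $T_I=T^{n_k}$ acting as a translation $y\mapsto y+t_k$.

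I then focus on the leftmost continuity component $I_1=(x,x+\ell)$ of $T_I$, which touches the endpoint $x$ because $T_I$ has only finitely many interior discontinuities in the open interval $I$. The containment $T^{n_1}(I_1)\subset I$ reads $(x+t_1,x+\ell+t_1)\subset(x,x+\delta)$, and so forces $t_1\in[0,\delta-\ell]$; in particular $t_1\ge 0$.

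The crucial step is the identification $T^{n_1}(x)=x+t_1$. Because $\mathcal O^+(x)$ is infinite, none of $T^i(x)$ for $0\le i<n_1$ lies in $\operatorname{Sing}(T)$, so $T^{n_1}$ is continuous at $x$; taking the one-sided limit along $I_1$ gives $T^{n_1}(x)=\lim_{y\to x^+,\,y\in I_1}(y+t_1)=x+t_1$. If $t_1>0$, then $T^{n_1}(x)\in(x,x+\delta)=I$, directly contradicting $\mathcal O^+(x)\cap I=\varnothing$. If instead $t_1=0$, then $T^{n_1}$ is the identity on $I_1$ and the same continuity yields $T^{n_1}(x)=x$, so $x$ is periodic, contradicting the non-periodicity hypothesis. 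Either case delivers the desired contradiction.

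The step I expect to be the main obstacle is the justification that $T_I$ really does have only finitely many continuity components, with the leftmost one of the form $(x,x+\ell)$ touching the endpoint $x$. This is a classical fact for IETs on a single interval, but in our slightly nonstandard eligible-map setup (with $X$ possibly a disjoint union of intervals, and orbits left undefined at singularities), one has to verify that the classical first-return argument carries over cleanly — this is presumably where the no-connection hypothesis and the eligibility framework developed earlier in the appendix come into play, ensuring that the boundary behavior of $T_I$ near $x$ does not produce pathological components.
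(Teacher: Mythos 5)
Your argument is correct, but it takes a genuinely different route from the paper's. The paper avoids the induced-map machinery entirely: it shrinks $I$ to a subinterval $I'$ abutting $x$ whose forward orbit meets no singularity before returning, pushes $I'$ forward rigidly until the first time $j$ at which $T^j(I')$ overlaps $I'$ (this happens because the total length of $X$ is finite), and then analyzes which endpoint of $I'$ lands in the overlap, recursing once on the subinterval between $x$ and the point $z$ with $T^j(z)=x$ in the unfavorable case. You instead invoke the structure of the first-return map $T_I$ as a finite IET and read the conclusion off the sign of the translation on the continuity component abutting $x$: the containment $T^{n_1}(I_1)\subset I$ forces $t_1\ge 0$, and $t_1=0$ is excluded by aperiodicity. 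That sign argument, including the extension $T^{n_1}(x)=x+t_1$ to the endpoint, is sound: since $\mathcal O^+(x)$ is infinite, no $T^i(x)$ lies in $\operatorname{Sing}(T)$, so $T^{n_1}$ is a single translation on a full two-sided neighborhood of $x$ and must agree with $y\mapsto y+t_1$ there. The step you flag --- finiteness of the set of discontinuities of $T_I$ --- is indeed the only thing left to check, and it does carry over to the eligible-map setting: each discontinuity of $T_I$ in $I$ arises from the first backward entrance into $I$ of a point of $\operatorname{Sing}(T)$ or of an endpoint of $I$, and each such point $\beta$ contributes at most one discontinuity (if the backward orbit of $\beta$ meets $I$ at times $i_1<i_2$, then $T^{-i_2}(\beta)$ returns to $I$ before reaching $\beta$). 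This is essentially the counting with which the paper opens its own proof, and the no-connections hypothesis is not what makes it work. What your approach buys is brevity once the induction structure theorem is granted; what the paper's buys is self-containedness, which is the point of the appendix, since these classical facts are being reproved under the billiard convention that orbits stop at singularities.
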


\begin{proof}
Since there are a finite number of singularities, there are a finite number 
of trajectories starting at points of $I$ that hit a singularity before crossing 
$I$ again. By shortening $I$ to a subinterval $I'$ with one endpoint $x$
we can assume that no trajectory leaving 
$I'$ hits a singularity before returning to $I'$. Now consider the forward
iterates $T^i(I')$. By the definition of $I'$, these are 
intervals of the same length for each $i \ge 0$ until the interval returns and
overlaps $I'$.  The interval $I'$ must return and overlap $I'$ in a finite time since the total length of $X$ is finite. Let $j$ be the minimum number of iterates needed until $T^i(I')$ overlaps $I'$. $T^j(x) \ne x$ since $x$ is not periodic.

If $T^{j}(x)\in I'$, we are done (see Figure 1 left).  
\begin{figure}
\begin{minipage}[ht]{0.5\linewidth}
\centering
\begin{tikzpicture}
\draw [](-1,0) edge[]  (1,0) ;
\node at (-1,0.25) {\tiny{$y$}} ;
\node at (1,0.25) {\tiny{$x$}} ;
\node at (-1,0) {\tiny{$\circ$}} ;
\node at (1,0) {\tiny{$\circ$}} ;
\draw [](-2.3,-0.2) edge[]  (-0.3,-0.2) ;
\node at (-2.3,-0.45) {\tiny{$T^{j}y$}} ;
\node at (-0.3,-0.45) {\tiny{$T^{j}x$}} ;
\node at (-2.3,-0.2) {\tiny{$\circ$}} ;
\node at (-0.3,-0.2) {\tiny{$\circ$}} ;
\end{tikzpicture}
\end{minipage}\nolinebreak
\begin{minipage}[ht]{0.5\linewidth}
\begin{tikzpicture}
\draw [](-1,0) edge[]  (1,0) ;
\node at (-1,0.25) {\tiny{$y$}} ;
\node at (-1,0) {\tiny{$\circ$}} ;
\node at (-0.3,0.25) {\tiny{$z$}} ;
\node at (-0.3,0) {\tiny{$\circ$}} ;
\node at (1,0.25) {\tiny{$x$}} ;
\node at (1,0) {\tiny{$\circ$}} ;
\draw [](0.3,-0.2) edge[]  (2.3,-0.2) ;
\node at (0.3,-0.45) {\tiny{$T^{j}y$}} ;
\node at (0.3,-0.2)  {\tiny{$\circ$}} ;
\node at (2.3,-0.45) {\tiny{$T^{j}x$}} ;
\node at (2.3,-0.2) {\tiny{$\circ$}} ;
\node at (1,-0.45) {\tiny{$T^{j}z$}} ;
\node at (1,-0.2) {\tiny{$\circ$}} ;
\draw [](-0.5,-0.8) edge[]  (0.8,-0.8) ;
\node at (-0.5,-1) {\tiny{$T^kz$}} ;
\node at (-0.5,-0.8) {\tiny{$\circ$}} ;
\node at (0.8,-1) {\tiny{$T^kx$}} ;
\node at (0.8,-0.8) {\tiny{$\circ$}} ;

\end{tikzpicture}
\end{minipage}
\caption{The two ways $I'$ and $T^{j}(I')$ can overlap}
\end{figure}
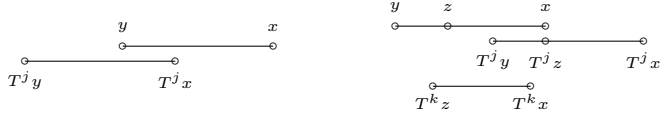
Otherwise (see Figure 1 right) it is the trajectory leaving the other endpoint $y$ of $I'$ which returns to $I'$ at time $j$ and for some $z \in I'$ we have $T^{j}(z) =x$. We now consider the interval $I''$
with endpoints $z$ and $x$ and apply the previous analysis to it. 
Orbiting $I''$ in the forward
direction it must return to $I''$ at a certain (minimal) time $k > j$. We have either $T^k(z)\in I''$ or $T^k(x)\in T^j(I')$ But the first can not happen since
it implies that $T^k(x)\in T^j(I')$, or equivalently  $T^{k-j}(x)\in I'$, which
contradicts the minimality of $k$.
Thus $T^k(x)\in I''$ as required.
\end{proof}

\begin{proofof}{Theorem \ref{keane}} By way of contradiction suppose 
there is a non-periodic  infinite trajectory $\mathcal{O}(x)$ which is not dense.
Let $A \ne X$ be the set of limit points of $\mathcal{O}(x)$. Then $A$ is invariant under the map $T$. Since
$A \ne X$ and $T$ is irreducible one can choose a trajectory $\mathcal{O}(y) \subset \operatorname{int}(X)\cap \overline{A}\setminus\operatorname{int(A)}$ (here the closure and the interior are taken in $\R$). Note that $\mathcal O (y)\subset A$ since $A$ is closed.

We will show that this trajectory is a saddle connection. 
We prove this by contradiction. Suppose it is not true, then $\mathcal{O}(y)$
is infinite in at least one of the two directions. We will
show that this implies that there is an open neighborhood of $y$ contained in $A$, a contradiction to $y$ being a boundary point.
Let $I$ be an open interval with $y$ an endpoint. It is enough to show that there exists an open interval $(y, z) \subset I$ which is contained in $A$. Doing this on both sides will yield our open neighborhood.

Lemma \ref{l-conn} implies that $\mathcal{O}(y)$ hits $I$ again at some point $z$. If
the interval $(y,z) \subset A$ we are done. Suppose not. Then there exists $w \in (y,z)$ which
is not in $A$. Since $A$ is closed, there is a largest open subinterval $I' \subset (y,z)$ containing $w$ which is in the complement of $A$. Let $v$ be the endpoint of $I'$ closest to $y$. Then, since $A$ is closed,
$v \in A$ and the trajectory through $v$ must be a saddle connection. For if it were infinite in
either direction, it would intersect $I'$. Since $A$ is invariant, 
this contradicts
that $I'$ misses $A$.
\end{proofof}

\begin{corollary}\label{c7}
If $T:X\dashrightarrow X$ is an aperiodic and irreducible IET with no connections, then the orbit of every open interval $I$ covers $X$. Moreover, there exists $K,L$ such that $\bigcup_{k=K}^LT^k(I)=X$.
\end{corollary}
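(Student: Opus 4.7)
The first assertion is immediate: by Theorem~\ref{keane}, $T$ is minimal, and then Lemma~\ref{l4} yields $\bigcup_{k \in \mathbb Z} T^k(I) = X$. For the finite range $K, L$, my plan is a compactness argument on the closure $\widehat X := \overline{X} \subset \mathbb R$, which by assumption on $X$ is a disjoint union of finitely many compact intervals; the complement $E := \widehat X \setminus X$ is a finite set of endpoints. The family $\{T^k(I)\}_{k \in \mathbb Z}$ is an open cover of $X = \widehat X \setminus E$ by the first assertion, and upon adjoining a small one-sided half-open neighborhood of each $e \in E$ (open in $\widehat X$) it becomes an open cover of the compact $\widehat X$, which thus admits a finite subcover.

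To have this subcover consist purely of iterates of $I$, I must prove the sub-claim: \emph{for every $e \in E$ there exist an integer $k_e$ and a one-sided open neighborhood $V_e \subset X$ of $e$ with $V_e \subset T^{k_e}(I)$.} Granting the sub-claim, each half-open neighborhood $V_e \cup \{e\}$ appearing in the finite subcover can be absorbed into the corresponding $T^{k_e}(I)$ (after intersecting with $X$), leaving a finite index set $F \subset \mathbb Z$ with $X \subset \bigcup_{k \in F} T^k(I)$; the desired bounds are then $K = \min F$ and $L = \max F$.

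To prove the sub-claim, fix $e \in E$, say a right endpoint (the other case being symmetric). Using discreteness of the singularity sets $\operatorname{Sing}(T^{-j})$, for each $k \geq 0$ choose $\delta_k > 0$ so small that $T^{-k}$ acts on $(e - \delta_k, e)$ as a single translation, with image $(e^{(-k)} - \delta_k, e^{(-k)})$, where $e^{(-k)} := \lim_{x \to e^-} T^{-k}(x)$ is the left-limit pseudo-orbit of $e$. If some $e^{(-k_e)}$ lies in the interior of $I$, a further shrinkage of $\delta_{k_e}$ gives $T^{-k_e}((e - \delta_{k_e}, e)) \subset I$, and applying $T^{k_e}$ yields $T^{k_e}(I) \supset (e - \delta_{k_e}, e)$, proving the sub-claim. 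The main obstacle is therefore establishing that the pseudo-orbit $\{e^{(-k)} : k \geq 0\}$ meets $I$; my plan is to double each point of $\operatorname{Sing}(T) \cup \operatorname{Sing}(T^{-1}) \cup E$ into its one-sided limit copies to build a compact space $\widetilde X$ on which $T$ extends to a homeomorphism $\widetilde T$, verify that the no-connections hypothesis for $T$ implies minimality of $\widetilde T$ by a minor adaptation of the Keane argument behind Theorem~\ref{keane}, and then conclude that the backward $\widetilde T$-orbit of $e^- \in \widetilde X$ (which contains the pseudo-orbit) is dense and hence meets $I$.
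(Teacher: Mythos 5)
Your first paragraph and your overall compactness frame coincide with the paper's: the paper also passes to $\hat X=\overline X$, observes that only the finite set $E=\overline X\setminus X$ of component endpoints is added, and reduces the finiteness statement to showing that each $a\in E$ lies in the closure of a \emph{single} iterate $T^{k}(I)$. You are in fact more explicit than the paper in isolating this as the sub-claim that carries all the content. Where you diverge is in how you prove it. The paper stays entirely inside the already-established covering $\bigcup_{k}T^k(I)=X$: it chooses a one-sided interval $J$ at $a$ and an open interval $J'\subset X\setminus\operatorname{Sing}(T)$ with $T(J')=J$; since $J'\subset X$, some single open set $T^{k_0}(I)$ contains a one-sided neighborhood of the endpoint of $J'$ corresponding to $a$, and pushing that neighborhood forward by $T$ places a one-sided neighborhood of $a$ inside $T^{k_0+1}(I)$. (If that endpoint of $J'$ again lies in $E$ one iterates; the recursion terminates because $E$ is finite and a backward pseudo-orbit trapped in $E$ would be periodic, forcing an interval of periodic points and contradicting aperiodicity.) You instead compactify by doubling and invoke minimality of the extended homeomorphism $\widetilde T$ to make the backward pseudo-orbit of $e^-$ dense.

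Two problems with your route as written. First, doubling only the points of $\operatorname{Sing}(T)\cup\operatorname{Sing}(T^{-1})\cup E$ does not make $\widetilde T$ a homeomorphism: if $d$ is doubled but $T^{-1}(d)$ is not, then $\widetilde T$ cannot be continuous at $T^{-1}(d)$, since a neighborhood of $T^{-1}(d)$ maps onto both sides of $d$. You must double the full (countable) orbits of these points, as in the construction recalled in \cite[Section 2.1.2]{MMY}. Second, and more substantively, the statement you defer --- that absence of connections for $T$ forces minimality of $\widetilde T$, i.e.\ density of the one-sided pseudo-orbits of the doubled points --- is not a ``minor adaptation'' of Theorem \ref{keane}: that theorem together with Lemma \ref{l4} only gives density of genuine orbits of points of $X$, and upgrading this to the pseudo-orbits of boundary and singular points is exactly the difficulty your sub-claim isolates. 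So the hardest step of your proof is hidden inside an unproved lemma. The lemma is true and standard, and Keane's interval-overlap argument can indeed be carried out in the doubled space, but this needs to be done; alternatively, and more economically, you can replace it by the one-step pull-back argument above, which derives the sub-claim from the covering $\bigcup_k T^k(I)=X$ and the openness of each $T^k(I)$ alone.
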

\begin{proof}
The first statement is a direct corollary of Theorem \ref{keane} and Lemma \ref{l4}. To see that the covering happens in finite time we need to use compactness.  Let $\overline{I}$ denote the closure of $I$ in  $\R$  and 
$\hat{I} := I \cup ((\overline X\setminus X) \cap \overline{I})$. Then $\hat{I}$ is open in the induced topology of $\hat{X} = \overline{X}$.  

Now consider $a \in \overline X\setminus X$. 
By the definition of IET any point
$b \in \operatorname{int}(X)$ close enough to $a$ satisfy that: the open interval $J$ whose endpoints are $b$ and $a$ is included in $X$, $a$ is in $\hat J$  and there is an open interval $J' \subset X\setminus\operatorname{Sing}(T)$ such that $T(J') =J$ (so $a$ is in $\hat{T(J')}=\hat{J}$).

By assumption
$\bigcup_{k \in \Z} {T^k (I)}$ covers $J'$ and $J$, thus $\bigcup_{k \in \Z} \hat{T^k (I)}$ covers $a$. Repeating this for all points $a\in\overline{X}\setminus X$, it shows that 
$\bigcup_{k \in \Z} \hat{T^k (I)}$ is a countable open cover of the compact set $\hat{X}$, and thus there exists $K,L$
such that $\bigcup_{k =K}^L \hat{T^k (I)}$ is a open cover of $\hat{X}$.
This immediately implies that $\bigcup_{k =K}^L {T^k (I)}$ is an open cover of $X$.
\end{proof}

\subsection{Application to rational polygonal billiards}
A {\em polygon} $P$ is a compact, finitely connected, planar domain whose boundary $\partial P$ consists of a finite union of segments.
We play billiards in $P$, take any point $s \in \partial P$ any $\theta \in \S1$ such that the vector $(s,\theta)$ points into
the interior of $P$; flow $(s,\theta)$ until it hits the boundary $\partial P$ and then reflect the direction with the usual law
of geometric optics, angle of incidence equals angle of reflection to produce the point $(s',\theta') =  T(s,\theta)$. $T$ is
called the billiard map, it is not defined if $s'$ is a corner of the polygon.  The inverse $T^{-1}$ is defined if $s$ is not a corner. 
A polygon is called {\em rational} if the angle between any pair of sides is a rational multiple of $\pi$.  Suppose that the
angles are $\pi\frac{m_i}{n_i}$ with $m_i$ and $n_i$ relatively prime; let $N$ be the least common multiple of the $n_i$, and
$D_N$ be the dihedral group generated by reflections in the lines through the origin that meet at angle $\frac{\pi}{N}$. 
Let $D_N(\theta)$ denote the $D_N$ orbit of a direction $\theta \in \S1$.
The following Theorem is a compilation of the well known results, see for example \cite{MaTa}[Sections 1.5~and~1.7]:

\begin{theorem} Suppose $P$ is a rational polygon.\\
i) For each $\theta \in \S1$ any orbit starting in  the direction $\theta$ only takes directions in the set  $D_N(\theta)$.\\
ii) For each $\theta \in \S1$ the restriction $T_\theta$ of the map $T$ to $\partial P \times D_N(\theta)$ is an interval exchange transformation in the sense defined in this appendix.\\
iii) If $\theta$ is non-exceptional, then $T_{\theta}$ is irreducible and has no connections.
\end{theorem}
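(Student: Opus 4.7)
The plan is to treat the three items in order, with the standard unfolding of a rational polygon to a translation surface as the conceptual backbone.

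For \emph{(i)}, I would observe that after picking any one side of $P$ as a reference, the orientation modulo $\pi$ of every side of $P$ lies in $\frac{\pi}{N}\mathbb{Z}$, since every pair of sides meets at an angle in $\frac{\pi}{N}\mathbb{Z}$. A reflection in a line through the origin at angle $\alpha$ sends a direction $\theta$ to $2\alpha-\theta$, and such a reflection is an element of $D_N$ whenever $\alpha\in\frac{\pi}{N}\mathbb{Z}$. Hence each billiard reflection sends an element of $D_N(\theta)$ to another element of $D_N(\theta)$, and \emph{(i)} follows by induction on the number of collisions.

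For \emph{(ii)}, I would fix $\theta$ and describe the phase space $\partial P\times D_N(\theta)$ concretely. For each side $e$ of $P$ and each $\psi\in D_N(\theta)$ pointing into $P$ at interior points of $e$, one gets an open interval parametrized by arc length on $e$; the phase space is the finite disjoint union of these. On each maximal open sub-piece on which the straight flight in direction $\psi$ lands on the interior of a fixed side $e'$ without crossing a corner, the composition of the flight with the reflection law preserves the arc-length parameter up to a constant shift, so $T_\theta$ is a translation there. The points where $T_\theta$ is undefined are those whose flight lands at a corner; there are only finitely many such points on each piece since $P$ has finitely many corners and at most finitely many are reachable in one flight from a given starting piece. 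This exhibits $T_\theta$ as an IET as defined in the appendix.

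For \emph{(iii)}, the absence of connections is immediate: by construction a connection for $T_\theta$ is a finite orbit whose starting point is $T_\theta^{-1}$-singular and whose final point is $T_\theta$-singular, i.e., a billiard trajectory from a corner of $P$ to a corner of $P$, i.e., a saddle connection, and by assumption there are none. Irreducibility is the more delicate point. I would argue it by passing to the unfolding: glue $2N$ copies of $P$ along their sides using the $D_N$-action on directions to build a translation surface $Y_\theta$ on which the direction-$\theta$ linear flow is defined and from which $T_\theta$ is recovered as the first return map to the image of $\partial P$. A nontrivial $T_\theta$-invariant sub-union of components of $\partial P\times D_N(\theta)$ would yield a proper, closed, flow-invariant subset of $Y_\theta$ whose topological boundary is a union of flow-orbits joining singularities of $Y_\theta$, that is, saddle connections; the absence of saddle connections forces the boundary to be empty, and connectedness of $Y_\theta$ then forces the sub-union to be everything. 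The step I expect to be the main obstacle is the connectedness of $Y_\theta$ in full generality; in the $N$-ringed applications however, $P$ has only horizontal and vertical sides, so $N=2$ and reflections in horizontal and vertical lines act transitively on $D_2(\theta)=\{\pm\theta,\pm(\theta-\pi)\}$ for non-axial $\theta$, which together with connectedness of $P$ yields connectedness of $Y_\theta$ immediately.
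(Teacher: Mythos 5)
The paper does not actually prove this theorem; it states it as ``a compilation of the well known results'' and refers to Masur--Tabachnikov, so your argument has no in-paper counterpart and must be judged on its own. Parts (i) and the no-connection half of (iii) are correct. The genuine error is in (ii): it is \emph{not} true that the flight from a side $e$ to a side $e'$ ``preserves the arc-length parameter up to a constant shift.'' In arc length the branch is affine with derivative $\pm\sin\beta/\sin\beta'$, where $\beta$ (resp.\ $\beta'$) is the angle the flight direction makes with $e$ (resp.\ $e'$); this equals $\pm1$ only when the two sides are parallel. (In the unit square with $\theta=\pi/3$, the bottom-to-right-side branch is $x\mapsto\sqrt3\,(1-x)$.) To exhibit $T_\theta$ as an IET in the sense of the appendix you must reparametrize each component $(e,\psi)$ by the transverse coordinate $\sin\beta\cdot(\text{arc length})$ --- these are precisely the direction-dependent constants $K_i$ that the paper introduces in Section 3 --- and you must also fix orientations of the components (as in the paper's Figure 4) so that every branch is orientation-preserving, i.e.\ a genuine translation rather than a flip. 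Both fixes are routine, but as written your (ii) asserts something false and says nothing about flips.

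On irreducibility in (iii), your unfolding argument is sound in outline, but the connectedness of $Y_\theta$, which you flag as the main obstacle in general, is not one: the subgroup of $D_N$ generated by the reflections in the directions of the sides of $P$ contains the rotations by $2\pi m_i/n_i$ (products of reflections in adjacent sides), hence the rotations by $2\pi/n_i$ since $\gcd(m_i,n_i)=1$, hence the rotation by $2\pi/N$ since $N=\operatorname{lcm}(n_i)$; adjoining any one side-reflection gives all of $D_N$, which therefore acts transitively on $D_N(\theta)$, and connectedness of $P$ then gives connectedness of $Y_\theta$. Your special case ($N=2$, vertical and horizontal sides) does cover every table the paper uses, but the theorem as stated is general, so either supply the computation above or restrict the claim. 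Finally, when you conclude that the boundary of the flow-saturation of an invariant sub-union of components consists of saddle connections, you are implicitly using that every semi-infinite orbit returns to $\partial P$ and that the components of the phase space are treated as formally disjoint intervals (so their closures do not meet at shared corners); both hold here but should be said.
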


Combining this theorem with Corollary \ref{c7} yields

\begin{corollary}\label{c9}
If $P$ is a rational polygon and $\theta$ is non-exceptional, then the map $T_\theta: X \to X$ is minimal, and thus the orbit
of every open interval $I$ covers $X$.
\end{corollary}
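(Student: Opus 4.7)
The plan is to combine the three-part Theorem on rational polygons with Corollary~\ref{c7}, which is the ``finite-time covering'' form of Keane's minimality criterion proved earlier in the appendix. Part~(ii) of the Theorem says that for every direction $\theta\in\S1$ the restriction $T_\theta$ is an IET in the precise sense introduced in this appendix, i.e.\ an eligible map whose domain $X$ is a finite disjoint union of open intervals and which acts as a translation on each connected component of $X\setminus\operatorname{Sing}(T_\theta)$. So the first step is just to invoke (ii) to put $T_\theta$ in the framework where Keane's theorem applies.

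Next, part~(iii) of the Theorem supplies two of the three hypotheses of Theorem~\ref{keane}: irreducibility and the absence of connections. The remaining hypothesis is aperiodicity, and the plan is to deduce it from the absence of connections together with the fact that we are working with a rational polygonal billiard. The standard argument is that a periodic orbit of $T_\theta$ lies in a maximal open periodic cylinder, and the boundary of such a cylinder is a union of orbits of singularities which are finite and non-periodic, that is, connections in the sense of this appendix. Thus the existence of a periodic point in a non-exceptional direction would force a connection, contradicting (iii). Consequently $T_\theta$ is aperiodic, irreducible and has no connections.

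The conclusion is then immediate: Theorem~\ref{keane} gives minimality of $T_\theta$, and Corollary~\ref{c7} upgrades this to the statement that for every open interval $I\subset X$ there exist integers $K\le L$ with $\bigcup_{k=K}^{L} T_\theta^{k}(I)=X$. The only place the argument really does any work beyond quoting previous results is the reduction from ``no connections'' to ``aperiodic''; I expect this to be the main (and essentially only) obstacle, and the point where one should be most careful, since Keane's theorem as stated requires aperiodicity as a separate hypothesis. Everything else is bookkeeping: checking that the $X$ appearing in the Theorem of Section~A.4 is literally the $X$ of the IET setup of the appendix, and that $\operatorname{Sing}(T_\theta)$ consists exactly of those phase-space points that map to or come from a corner, so that the notions of ``connection'' for the IET and ``saddle connection'' for the billiard coincide.
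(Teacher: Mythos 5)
Your proof is correct and follows essentially the same route as the paper, which simply states that Corollary \ref{c9} is obtained by combining the compilation Theorem on rational polygons with Corollary \ref{c7}. Your explicit derivation of aperiodicity from the absence of connections (via the boundary of a maximal periodic cylinder) fills in a step the paper leaves implicit, since part (iii) of that Theorem only asserts irreducibility and absence of connections while Corollary \ref{c7} formally also requires aperiodicity.
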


\end{document}